\newtheorem{theorem}{Theorem}
\newtheorem{lemma}[theorem]{Lemma}
\newtheorem{conjecture}[theorem]{Conjecture}
\newtheorem{remark}{Remark}
\begin{document}
			
		\title{Exponential Generalised Network Descriptors}
		

		\author{Suzana Antunovi\'c (1), Ton\'ci Kokan (2), Tanja Vojkovi\'c (3) \footnote{Corresponding author: tanja@pmfst.hr},\\ Damir Vuki\v{c}evi\'c (3)}

			
		\maketitle
		
		\noindent (1) Faculty of Civil Engineering, Architecture and Geodesy, Split, Croatia\\
		(2) Department of Mathematics, Faculty of Science, Zagreb, Croatia\\
		(3) Department of Mathematics, Faculty of Science, Split, Croatia
		
		\begin{abstract}
			In communication networks theory the concepts of networkness and network
			surplus have recently been defined. Together with transmission and
			betweenness centrality, they were based on the assumption of equal
			communication between vertices. Generalised versions of these four
			descriptors were presented, taking into account that
			communication between vertices $u$ and $v$ is decreasing as the distance between  them is increasing.
			Therefore, we weight the quantity of communication by  $\lambda^{d(u,v)}$ where $\lambda \in  \left\langle0,1 \right\rangle$. 
			Extremal values of these descriptors are analysed.
		\end{abstract}
	
	Keywords:
		graph theory, complex networks, network descriptors, transmission, centrality 
		
		68R10, 05C35, 94C15 

\section{Introduction}
\label{}

Complex networks are extensively used to model objects and their relations \cite{BI2}, \cite{BI1}. Throughout this paper we consider the  representation of a complex network as a simple connected graph $G=\left(V,E\right)$ and use standard graph--theoretical terminology \cite{bollobas}.

Betweenness centrality is one of key concepts in the study of complex networks \cite{BI6}, \cite{freeman78} and it can be efficiently calculated by Girvan--Newman algorithm \cite{BI4b}, \cite{BI4}.

For an edge $uv$, edge betweenness $b(uv)$ is defined in the following way:
$$
b(uv) = \sum_{\{k,l\}\in {\binom{V}{2}}}\frac{s_{uv}^{kl}}{s^{kl}},
$$
where $s_{uv}^{kl}$ is the number of shortest paths between vertices $k$ and $l$ that pass through the edge $uv$ and $s^{kl}$ is the total number of shortest paths between $k$ and $l$.

Betweenness centrality $c(u)$ of a vertex $u$ is sum of edge betweennesses of all edges incident to $u$:
$$
c\left( u\right) =\sum_{v\in [u]}b\left( uv\right),
$$
where $[u]$ is the set of neighbours of vertex $u$. Note that measure is closely related to, yet different from Freeman's betweenness centrality defined in \cite{BI7}:
$$
b(u) = c(u) - n + 1.
$$

In the context of the communication networks, betweenness centrality $c\left( u\right)$ can be interpreted as the quantity of communication processed by a vertex $u$ as stated in \cite{BI8}. On the other hand, transmission of the vertex $u$ defined as
$$t(u) =\sum_{v\in V}d(u,v)$$
where $d(u,v)$ is the distance between vertices $u$ and $v$, can be interpreted as the cost of the vertex to the network \cite{BI8}. 

Network surplus of the vertex $u$ ("added value'' to the network provided by vertex $u$) is defined by $\nu \left( u\right) =c\left( u\right) -t\left(u\right)$. Another way to measure productivity of vertex $u$ is its networkness defined in \cite{BI8} by $N\left( u\right) =c\left( u\right) /t\left(u\right)$ .
Note that interpretation of the betweenness centrality as the amount of information processed by the vertex $u$ assumes that the quantity of the information exchanged by any two vertices is equal. This was amended in \cite{BV} by weighting the amount of communication by $d\left( u,v\right) ^{\lambda}$ for some $\lambda < 0$, generalising the case $\lambda = -1$ introduced in \cite{borgatti}. Now we consider network descriptors based on the assumption that the amount of communication decreases as the distance between two vertices increases. Moreover, we assume that this amount is proportional to $\lambda^{d(u,v)}$ where $\lambda \in \left\langle0,1 \right\rangle$. We define:
$$
t_{\lambda}^{e}(u) =\sum_{v\in V\setminus\{u\}}d(u,v)\cdot \lambda^{d(u,v)}, 
$$

$$
b_{\lambda}^{e}(uv) = \sum_{\{k,l\}\in {\binom{V}{2}}}\frac{s_{uv}^{kl}}{s^{kl}}\cdot \lambda^{d(u,v)},
$$

$$
c_{\lambda}^{e}(u) =\sum_{v\in [u]}{\sum }b_{\lambda}^{e}(uv)
$$

Furthermore, we define:
$$
N_{\lambda}^{e}(u) =\frac{c_{\lambda}^{e}(u) }{t_{\lambda}^{e}(u) },
$$
$$
\nu _{\lambda}^{e}(u)  =c_{\lambda}^{e}(u) -t_{\lambda}^{e}(u).
$$

Analogously as in \cite{BI8} we define: 
\begin{center}
	\begin{tabular}[c]{lll}
		& & \\
		$mc_{\lambda}^{e}(G) = \min \left\{ c_{\lambda}^{e}(u): u\in V \right\}$ & & \\[2mm]
		$Mc_{\lambda}^{e}(G) = \max \left\{ c_{\lambda}^{e}(u): u\in V \right\} $ &  & \\[4mm]
		$mt_{\lambda}^{e}(G)= \min \left\{ t_{\lambda}^{e}(u): u\in V \right\} $ & & \\[2mm]
		$Mt_{\lambda}^{e}(G) = \max \left\{ t_{\lambda}^{e}(u): u\in V \right\} $ & &\\[4mm] 
		$mN_{\lambda}^{e}(G) = \min \left\{ N_{\lambda}^{e}(u): u\in V \right\} $ & & \\ [2mm]
		$ MN_{\lambda}^{e}(G) = \max \left\{ N_{\lambda}^{e}(u): u\in V \right\} $ & & \\ [4mm]
		$ m\nu _{\lambda}^{e}(G) = \min \left\{ \nu _{\lambda}^{e}(u): u\in V \right\}$ & & \\ [2mm]
		$ M\nu _{\lambda}^{e}(G) = \max \left\{ \nu _{\lambda}^{e}(u): u\in V \right\}$ & & \\ [2mm]
	\end{tabular}
\end{center}
and we are interested in finding the lower and upper bounds of these values for
all $\lambda \in \left\langle 0,1 \right\rangle$.
Our results can be summarized in the following way:
\vskip 0.3cm
\label{Tablica}
\begin{center}
	\begin{tabular}{|l||c|c|}
		\hline
		Descriptor & \multicolumn{2}{|c|}{$\lambda \in \left\langle 0,1\right\rangle 
			$} \\ \hline
		& Lower bound & Upper bound \\ \hline
		\multicolumn{1}{|c||}{$mt_{\lambda }^{e}$} & broom (starting vertex) & complete
		graph \textbf{*} \\ \hline
		\multicolumn{1}{|c||}{} & $A_n$ & $%
		(n-1)\lambda $ \\ \hline
		\multicolumn{1}{|c||}{$Mt_{\lambda }^{e}$} & \emph{open problem} & broom (starting vertex) \\ \hline
		&  & $B_n$ \\ \hline
		\multicolumn{1}{|c||}{$mc_{\lambda }^{e}$} & path (end vertices) & complete
		graph \textbf{*}\\ \hline
		& $\frac{\lambda^D-\lambda}{\lambda -1}$ & $(n-1)\lambda $ \\ \hline
		\multicolumn{1}{|c||}{$Mc_{\lambda }^{e}$} & \emph{open problem} & star
		(center) \\ \hline
		&  & $(n-1)\left[ \lambda +\frac{1}{2}(n-2)\lambda ^{2}\right] $ \\ \hline
		\multicolumn{1}{|c||}{$mN_{\lambda }^{e}$} & broom (starting vertex) & 
		vertex-transitive graph \\ \hline
		& $C_n$ & $1$ \\ \hline
		\multicolumn{1}{|c||}{$MN_{\lambda }^{e}$} & vertex-transitive graph & star
		(center) \\ \hline
		& $1$ & $\frac{1}{2}(n-2)\lambda +1$ \\ \hline
		\multicolumn{1}{|c||}{$m\nu _{\lambda }^{e}$} & broom (starting vertex) & vertex-transitive graph
		\\ \hline
		& $D_n$& $0$ \\ \hline
		\multicolumn{1}{|c||}{$M\nu _{\lambda }^{e}$} & vertex-transitive graph & 
		star (center) \\ \hline
		& $0$ & $\frac{1}{2}(n-1)(n-2)\lambda ^{2}$ \\ \hline
	\end{tabular}
	\captionof{table}{Extremal values of exponential generalised network descriptors}
\end{center}
\vskip 0.3cm
\
The terms $A_n, B_n, C_n$ and $D_n$ from Table \ref{Tablica} represent as follows:

\vskip 0,5cm

$A_n=\underset{1\leqslant D\leqslant n-1}{\min} {\frac{\lambda\left[1 -(D+1) \lambda  ^D+D\lambda ^{D+1}\right]}{(\lambda -1)^2}+(n-D-1)D\cdot \lambda ^{D}}$,

\vskip 0,5cm

$B_n=\underset{1\leqslant D\leqslant n-1}{\max} {\frac{\lambda\left[1 -(D+1) \lambda  ^D+D\lambda ^{D+1}\right]}{(\lambda -1)^2}+(n-D-1)D\cdot \lambda ^{D}}$,

\vskip 0,5cm

$C_n=\underset{1\leqslant D\leqslant n-1}{\min} {\frac{\lambda^D+\frac{1}{n-D}\left(\frac{\lambda^D-\lambda}{\lambda -1}\right)}{D\lambda^D+\frac{1}{n-D}\left[\frac{\lambda -D \lambda  ^D+(D-1)\lambda ^{D+1}}{(\lambda -1)^2}\right]}}$,

\vskip 0,5cm

$D_n=\underset{1\leqslant D\leqslant n-1}{\min} {\frac{\lambda\left[D \lambda^{D}-\lambda-(D-1)\lambda ^{D+1}\right]}{(\lambda -1)^2}+(n-D-1)(\lambda^D- D\cdot \lambda ^{D})}$.

\vskip 0,5cm

\begin{remark}
	Upper bounds marked \textbf{(*)} were stated and proven for $\lambda \in \left\langle 0, \frac{1}{2} \right\rangle$. They do not hold in general case. 
\end{remark}

\section{Connection between $t_{\lambda }^{e}$ and $c_{\lambda }^{e}$}

As in the papers \cite{BV} and \cite{BI8}, $t_{\lambda }^{e}$ can be considered as the cost of the vertex to the network and $c_{\lambda }^{e}$ can be considered as the quantity  of communication processed by the same vertex. Let us prove that the sum of these quantities is equal.

\begin{theorem}
	\label{Jednake sume}
	For each graph $G$ it holds:%
	\begin{equation*}
	\sum\limits_{u\in V}t_{\lambda }^{e}(u)=\sum\limits_{k,l\in V}d(k,l)\cdot
	\lambda ^{d(k,l)}=\sum\limits_{u\in V}c_{\lambda }^{e}(u).
	\end{equation*}
\end{theorem}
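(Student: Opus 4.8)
The plan is to establish the two equalities separately, with the middle quantity $\sum_{k,l\in V}d(k,l)\lambda^{d(k,l)}$ serving as a bridge. The first equality, $\sum_{u\in V}t_\lambda^e(u)=\sum_{k,l\in V}d(k,l)\lambda^{d(k,l)}$, should follow almost immediately from the definition $t_\lambda^e(u)=\sum_{v\in V\setminus\{u\}}d(u,v)\lambda^{d(u,v)}$: summing over $u$ gives a double sum over all ordered pairs $(u,v)$ with $u\neq v$, and since $d(u,u)=0$ makes the diagonal terms vanish anyway, this is exactly $\sum_{k,l\in V}d(k,l)\lambda^{d(k,l)}$. I would just remark that each unordered pair gets counted twice (or equivalently, phrase everything in terms of ordered pairs consistently).

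The substantive part is the second equality, $\sum_{u\in V}c_\lambda^e(u)=\sum_{k,l\in V}d(k,l)\lambda^{d(k,l)}$. Here I would unfold $c_\lambda^e(u)=\sum_{v\in[u]}b_\lambda^e(uv)$ and then $b_\lambda^e(uv)=\sum_{\{k,l\}\in\binom{V}{2}}\frac{s_{uv}^{kl}}{s^{kl}}\lambda^{d(u,v)}$, noting that for an edge $uv$ we have $d(u,v)=1$, so the weight $\lambda^{d(u,v)}$ is simply $\lambda$ on every edge. Thus $\sum_{u\in V}c_\lambda^e(u)=\sum_{u\in V}\sum_{v\in[u]}b_\lambda^e(uv)=2\sum_{uv\in E}b_\lambda^e(uv)$, since each edge is counted from both endpoints. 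Swapping the order of summation, this becomes $2\sum_{\{k,l\}}\frac{\lambda}{s^{kl}}\sum_{uv\in E}s_{uv}^{kl}$. The key combinatorial identity is that $\sum_{uv\in E}s_{uv}^{kl}=d(k,l)\cdot s^{kl}$: each of the $s^{kl}$ shortest $k$–$l$ paths has exactly $d(k,l)$ edges, and summing the edge-usage counts over all edges of the graph totals the combined length of all shortest paths. Substituting gives $2\sum_{\{k,l\}}\lambda\cdot d(k,l)=\sum_{k,l\in V}d(k,l)\lambda^{d(k,l)}$, where the factor $2$ converts the sum over unordered pairs to the sum over ordered pairs and $\lambda=\lambda^{d(k,l)}$ because $\dots$ wait — this is \emph{not} true in general, since $d(k,l)$ need not equal $1$.

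So the genuine subtlety, and the step I expect to be the main obstacle, is reconciling the fact that $b_\lambda^e(uv)$ carries the weight $\lambda^{d(u,v)}=\lambda^1=\lambda$ (an edge weight) while the target expression $\sum_{k,l}d(k,l)\lambda^{d(k,l)}$ carries the weight $\lambda^{d(k,l)}$ depending on the pair $(k,l)$. I suspect the intended reading of the definition of $b_\lambda^e(uv)$ is that the weight inside the sum is $\lambda^{d(k,l)}$ rather than $\lambda^{d(u,v)}$ — i.e., communication between $k$ and $l$ is weighted by $\lambda^{d(k,l)}$, and that weight is distributed along the shortest paths — which would make the telescoping work cleanly: then $\sum_{uv\in E}b_\lambda^e(uv)=\sum_{\{k,l\}}\frac{\lambda^{d(k,l)}}{s^{kl}}\sum_{uv\in E}s_{uv}^{kl}=\sum_{\{k,l\}}\lambda^{d(k,l)}d(k,l)$, and doubling yields the claim. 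I would therefore carry out the proof under that interpretation, making explicit the identity $\sum_{uv\in E}s_{uv}^{kl}=d(k,l)s^{kl}$ as the crux, and handle the ordered-versus-unordered bookkeeping by writing $\sum_{k,l\in V}$ for ordered pairs throughout so that the factor of $2$ from counting each edge at both endpoints matches the factor of $2$ from $\binom{V}{2}$ versus $V\times V$.
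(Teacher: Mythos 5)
Your proof is correct and follows essentially the same route as the paper's: unfold $c_{\lambda}^{e}$, swap the order of summation over pairs and over edges, and use the key identity $\sum_{uv\in E}s_{uv}^{kl}=d(k,l)\cdot s^{kl}$ coming from the fact that each of the $s^{kl}$ shortest paths uses exactly $d(k,l)$ edges. Your suspicion about the definition is confirmed by the paper's own proof, which silently replaces the weight $\lambda^{d(u,v)}$ appearing in the displayed definition of $b_{\lambda}^{e}(uv)$ by $\lambda^{d(k,l)}$ (so the exponent in that definition is a typo and you read it the intended way); you are in fact more explicit than the paper about the ordered-versus-unordered factor of $2$ in the bookkeeping.
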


\begin{proof}
	First equality holds by definition of transmission. Next, it holds%
	\begin{equation*}
	\sum\limits_{u\in V}c_{\lambda }^{e}(u)=\sum\limits_{u\in
		V}\sum\limits_{v\in \lbrack u]}\sum\limits_{k,l\in V}\frac{s_{uv}^{kl}}{%
		s^{kl}}\lambda ^{d(k,l)}=\sum\limits_{k,l\in V}\frac{\lambda ^{d(k,l)}}{%
		s^{kl}}\sum\limits_{u\in V}\sum\limits_{v\in \lbrack u]}s_{uv}^{kl}.
	\end{equation*}%
	For a given pair of vertices $(k,l)$, $\underset{u\in V}{\sum }\underset{%
		v\in \lbrack u]}{\sum }s_{uv}^{kl}$ is number of pairs $(u,v)$ such that $%
	d(u,v)=1$ and that a shortest path between $k$ and $l$ passes through the
	edge $uv$. The length of each of the $s^{kl}$ shortest paths from $k$ to $l$
	is $d(k,l)$ and therefore on each such path we can choose $d(k,l)$ pairs $%
	\{u,v\}$ such that $d(u,v)=1$. Thus, 
	\begin{equation*}
	\underset{u\in V}{\sum }\underset{v\in \lbrack u]}{\sum }s_{uv}^{kl}=d(k,l)%
	\cdot s^{kl}.
	\end{equation*}%
	Finally, we have%
	\begin{equation*}
	\sum\limits_{u\in V}c_{\lambda }^{e}(u)=\sum\limits_{k,l\in V}\frac{%
		\lambda ^{d(k,l)}}{s^{kl}}\cdot d(k,l)\cdot s^{kl}=\sum\limits_{k,l\in
		V}d(k,l)\cdot \lambda ^{d(k,l)}.
	\end{equation*}
\end{proof}

\section{Transmission}

Before concentrating on the lower and upper bounds for transmission, we need the following definition. 
\label{Metlica - Definicija}
A \emph{broom}  $B_{n,k}$ is a graph obtained by identification of a pendant vertex
of star $S_{k+1}$ and an end--vertex of path $P_{n-k}$. The other
end--vertex of the path is called \emph{starting vertex} of the broom.
Specially, $B_{n-1,2}=P_{n}$ and $B_{1,n}=S_{n}$. 
For minimal transmission let us prove:
\begin{theorem}
	\label{MIN  Trans donja}
	For each graph $G$ with $n$ vertices it holds%
	\begin{equation}
	\underset{1\leqslant D\leqslant n-1}{\min} {\frac{\lambda \left[ 1 -(D+1) \lambda  ^D+D\lambda ^{D+1} \right]}{(\lambda -1)^2}+(n-D-1)D\cdot \lambda ^{D}} \leqslant mt_{\lambda }^{e}(G)
	\label{min trans}
	\end{equation}%
	The lower bound is reached for a broom (in its starting vertex).
\end{theorem}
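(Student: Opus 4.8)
The plan is to show that for every vertex $u$ in every $n$-vertex graph $G$, the quantity $t_\lambda^e(u)$ is at least the displayed minimum over $D$, and then to verify that the starting vertex of a broom attains one of the terms in that minimum. The guiding principle is that $t_\lambda^e(u)=\sum_{v\neq u} d(u,v)\lambda^{d(u,v)}$ depends only on the multiset of distances $\{d(u,v):v\neq u\}$, so I would first reduce the problem to a purely combinatorial question about admissible distance sequences.

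First I would fix a vertex $u$ and let $D=\mathrm{ecc}(u)$ be its eccentricity, and let $n_i=|\{v:d(u,v)=i\}|$ for $1\le i\le D$, so that $n_1+\cdots+n_D=n-1$ and each $n_i\ge 1$ (a BFS layering argument: if some layer between $u$ and the farthest vertex were empty the graph would be disconnected). Then $t_\lambda^e(u)=\sum_{i=1}^D i\,n_i\,\lambda^i$. The key step is to minimise $\sum_{i=1}^D i\lambda^i n_i$ subject to $n_i\ge1$ and $\sum n_i=n-1$. Since $0<\lambda<1$ the coefficient $i\lambda^i$ need not be monotone in $i$, but I would argue that the minimum is obtained by putting the single forced unit in each layer $1,\dots,D-1$ and piling all remaining $n-D$ vertices into the layer with the smallest coefficient; one checks that among $i\in\{1,\dots,D\}$ the coefficient $i\lambda^i$ is smallest at $i=D$ for the relevant range, or more carefully that concentrating the surplus at distance $D$ is what the broom does and that this is optimal once $D$ is fixed. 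Substituting $n_1=\cdots=n_{D-1}=1$, $n_D=n-D$ gives
$$\sum_{i=1}^{D-1} i\lambda^i + D(n-D)\lambda^D = \frac{\lambda\left[1-(D+1)\lambda^D+D\lambda^{D+1}\right]}{(\lambda-1)^2}-D\lambda^D+D(n-D)\lambda^D,$$
using the standard closed form $\sum_{i=1}^{m} i\lambda^i = \frac{\lambda(1-(m+1)\lambda^m+m\lambda^{m+1})}{(\lambda-1)^2}$, and $-D\lambda^D+D(n-D)\lambda^D=(n-D-1)D\lambda^D$, which is exactly the expression inside the minimum. Taking the minimum over the possible eccentricities $1\le D\le n-1$ then yields the lower bound for $t_\lambda^e(u)$, hence for $mt_\lambda^e(G)$.

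Finally I would identify the extremal graph: in the broom $B_{n,k}$, the starting vertex of the path has exactly one vertex at each distance $1,\dots,n-k-1$ and then $k$ vertices at distance $n-k$ (the path of length $n-k$ followed by the $k$ leaves of the star, all at the same distance), so with $D=n-k$ its distance sequence is precisely $n_1=\cdots=n_{D-1}=1$, $n_D=n-D$, giving $t_\lambda^e$ equal to the $D$-th term above; choosing $k$ so that $D$ realises the minimum shows the bound is attained, and the degenerate cases $B_{n-1,2}=P_n$ and $B_{1,n}=S_n$ cover $D=n-1$ and $D=1$. The main obstacle I anticipate is the discrete optimisation step: because $i\lambda^i$ is not monotone on all of $\{1,\dots,n-1\}$, one must argue carefully that, for a \emph{fixed} eccentricity $D$, shifting surplus mass to layer $D$ (rather than to some interior layer with a smaller coefficient) is forced by the constraint $n_i\ge1$ together with the fact that the outermost layer is the only one allowed to be large without changing $D$ — and then that ranging over $D$ absorbs any gain that a different concentration layer might have offered. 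Handling this exchange/monotonicity argument cleanly is the crux; the algebraic simplification to the stated closed form is routine.
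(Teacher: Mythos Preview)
Your proposal follows essentially the same approach as the paper: fix a vertex $u$ with eccentricity $D$, peel off one vertex at each distance $1,\dots,D$ (you via BFS layer counts $n_i\ge 1$, the paper via a specific shortest path $uv_1\cdots v_D$ plus a remainder set $W$), bound the remaining $n-D-1$ contributions from below, and minimise over $D$. The paper closes the ``why does the surplus sit at layer $D$'' step by assuming $G$ is extremal and modifying the graph to derive a contradiction, whereas your remark that ranging over $D$ absorbs any gain from concentrating the surplus at some $j<D$ is an equivalent (and arguably cleaner) way to dispatch the same point---your claim that $i\lambda^i$ is smallest at $i=D$ is not true in general, so it is indeed this min-over-$D$ argument, not monotonicity, that you need.
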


\begin{proof}
	Let $G$ be a graph for which the minimum $mt_{\lambda }^{e}(G)$ is attained and let $u$ be a vertex of  the graph for which $t_{\lambda }^{e}(u)=mt_{\lambda }^{e}(G)$. Let $v_D$ be a vertex which is farthest away from $u$ and let $S=uv_1v_2...v_D$ be a shortest path from $u$ to $v_D$. Furthermore, let $k=n-D-1$ and let $W=\{w_1,w_2,...,w_k\}$ be a set of all vertices that do not lie on the path $S$. Since $d(u,v_i)=i$ for all  $i\in \{1,2,...,D\}$ we have: 
	\begin{equation*}
	mt_{\lambda}^{e}(G)=t_{\lambda}^{e}(u)=\sum\limits_{i=1}^{D}i\cdot \lambda ^{i}+\sum\limits_{w \in W}d(u,w)\cdot \lambda ^{d(u,w)}.
	\end{equation*}
	For positive numbers $a_1,a_2,...,a_n$ such that $a=min \{a_1,a_2,...,a_n\}$ it holds 
	\begin{equation*}
	\sum\limits_{i=1}^{n}a_i\geqslant \sum\limits_{i=1}^{n}a=n\cdot a, 
	\end{equation*}
	so
	\begin{equation*}
	mt_{\lambda}^{e}(G)=\sum\limits_{i=1}^{D}i\cdot \lambda ^{i}+\sum\limits_{w \in W}d(u,w)\cdot \lambda ^{d(u,w)}\geqslant \sum\limits_{i=1}^{D}i\cdot \lambda ^{i}+(n-D-1)x\cdot \lambda ^{x}
	\end{equation*}
	where $x=d(u,q)$ for some $q \in W$ for which the expression $d(u,q)\cdot \lambda ^{d(u,q)}$ has minimal value. 
	
	This means that the transmission will be minimal if all the vertices in $W$ are equally distant from $u$.  We will prove that, in that case, $x=D$. Suppose the contrary. Let us observe graph $G'$ which is obtained by removing vertex $v_D$ and connecting it to $v_{x-1}$. Transmission in $G'$ will be smaller than in $G$ which is a contradiction, hence $x=D$. We conclude that one of the graphs for which minimal transmission is attained is indeed a broom, i.e. all the vertices in $W$ are directly connected to $v_{D-1}$. 
\end{proof}

\begin{figure}[H]
	\begin{center}
		\includegraphics[width=\textwidth]{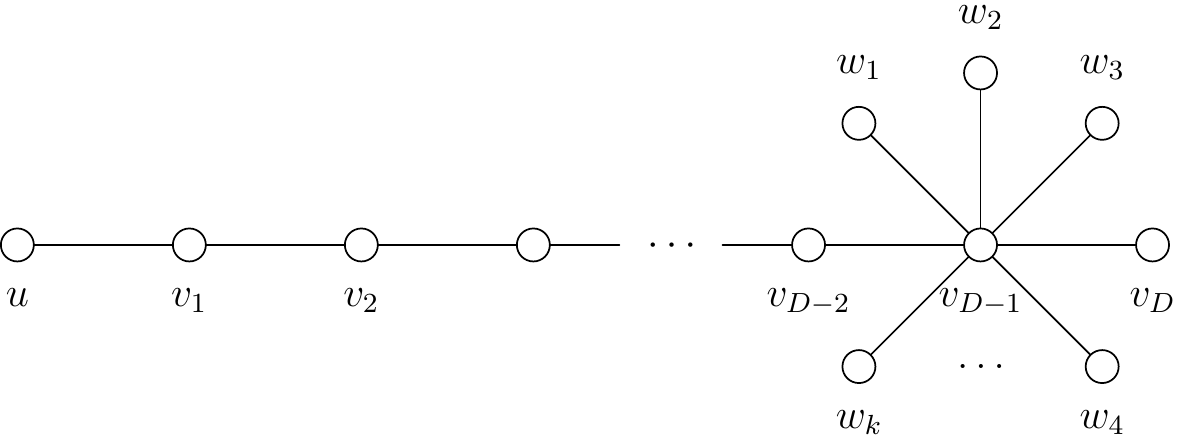}
		\caption{A broom that minimizes $mt_{\lambda }^{e}(G)$.}
		\label{Metlica - Slika}
	\end{center}
\end{figure}

 Under certain assumptions, we can reduce the case $D  \in \left\lbrace 1,..,n-1 \right\rbrace$ to the case when $D \in \left\lbrace1,n-1,\left\lfloor
D_{min}\right\rfloor, \left\lceil D_{min}\right\rceil  \right\rbrace$. Let us prove:

\label{Minimum izraza}
\begin{theorem}
	Function $f\left( D \right)= \sum\limits_{i=1}^{D}i\cdot \lambda ^{i}+(n-D-1)D\cdot \lambda ^{D}$ has local minimum for%
	\begin{equation}
	D_1=\frac{2-2\lambda+ \left[1+\left(\lambda-1 \right)n \right] Log \lambda+ \sqrt{S_{\lambda}}}{2 \left( \lambda-1\right)Log \lambda} 
	\end{equation}%
	and local maximum for%
	\begin{equation}
	D_2=\frac{2-2\lambda+ \left[1+\left(\lambda-1 \right)n \right] Log \lambda- \sqrt{S_{\lambda}}}{2 \left( \lambda-1\right)Log \lambda} 
	\end{equation}%
	where
	$$
	S_{\lambda}=4 \left(\lambda-1 \right)^{2}+\left[ \left( n-1 \right)^{2}+ \lambda^{2}n^{2}-2\lambda\left( n^{2}-n+2\right)\right]Log\lambda^{2},
	$$
	if $D_1, D_2 \in \mathbb{R}$ .
\end{theorem}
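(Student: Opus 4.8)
The plan is to treat $D$ as a continuous real variable and reduce everything to the elementary analysis of a quadratic. First I would replace the finite sum by its closed form: since $\lambda\neq 1$,
$$\sum_{i=1}^{D} i\lambda^{i}=\frac{\lambda\left[1-(D+1)\lambda^{D}+D\lambda^{D+1}\right]}{(\lambda-1)^{2}},$$
so that $f(D)=\dfrac{\lambda\left[1-(D+1)\lambda^{D}+D\lambda^{D+1}\right]}{(\lambda-1)^{2}}+(n-D-1)D\lambda^{D}$ extends to a smooth function on $\mathbb{R}$, and $D_1,D_2$ will be its critical points.

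Next I would differentiate, using $\frac{d}{dD}\lambda^{D}=\lambda^{D}\log\lambda$; write $L:=\log\lambda$. The constant term $\lambda/(\lambda-1)^2$ drops out, and every remaining summand of $f'(D)$ retains a factor $\lambda^{D}$, so $f'(D)=\lambda^{D}\,g(D)$ with $g$ a polynomial in $D$. The only $D^{2}$-contribution comes from differentiating $(n-D-1)D\lambda^{D}$ and equals $-L\,D^{2}$, so $g$ is a genuine quadratic, $g(D)=-L\,D^{2}+bD+c$, with
$$b=\frac{\lambda L}{\lambda-1}-2+(n-1)L,\qquad c=\frac{\lambda}{\lambda-1}-\frac{\lambda L}{(\lambda-1)^{2}}+(n-1).$$
Since $\lambda^{D}>0$, the critical points of $f$ are exactly the roots of $g$. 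Multiplying numerator and denominator of the quadratic formula by $\lambda-1$ and simplifying yields the stated expressions; concretely one checks $b(\lambda-1)=2-2\lambda+\left[1+(\lambda-1)n\right]L$ and, after expanding, $(\lambda-1)^{2}\bigl(b^{2}+4Lc\bigr)=S_{\lambda}$ — here all the terms linear in $L$ cancel, leaving precisely $4(\lambda-1)^{2}+\bigl[(n-1)^{2}+\lambda^{2}n^{2}-2\lambda(n^{2}-n+2)\bigr]L^{2}$. This verification of the discriminant is the one genuinely computational step, and the only place where an algebraic slip could hide, so I would carry it out in full.

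Finally, I would determine which root is the maximum and which the minimum by a sign argument rather than a second-derivative test. Because $0<\lambda<1$ we have $L=\log\lambda<0$, so the leading coefficient $-L$ of $g$ is positive: the parabola $g$ opens upward, hence (when $D_1,D_2\in\mathbb{R}$, as assumed) $g<0$ strictly between the two roots and $g>0$ outside them. Moreover $(\lambda-1)L>0$, so the root carrying $+\sqrt{S_{\lambda}}$ is the larger one, i.e. $D_{1}>D_{2}$. Since $f'(D)=\lambda^{D}g(D)$ and $\lambda^{D}>0$, the derivative $f'$ is positive on $(-\infty,D_{2})$, negative on $(D_{2},D_{1})$, and positive on $(D_{1},\infty)$; therefore $f$ has a local maximum at $D_{2}$ and a local minimum at $D_{1}$, as claimed.
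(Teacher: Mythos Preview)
Your proposal is correct and follows essentially the same approach as the paper: differentiate the closed form of $f$, factor out the positive term $\lambda^{D}$ (the paper keeps an extra positive factor $1/(\lambda-1)^{2}$), observe that what remains is a quadratic in $D$ with positive leading coefficient for $\lambda\in(0,1)$, and read off the stationary points via the quadratic formula. Your write-up is in fact more explicit than the paper's in two respects: you verify the discriminant identity $(\lambda-1)^{2}(b^{2}+4Lc)=S_{\lambda}$ directly, and you argue carefully (via the sign of $(\lambda-1)\log\lambda$) that $D_{1}>D_{2}$ before concluding which critical point is the local minimum and which the local maximum, whereas the paper simply asserts this from the sign of the leading coefficient.
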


\begin{proof}
	The problem reduces to finding minimum (maximum) for the function 
	$$f\left( D \right)= \sum\limits_{i=1}^{D}i\cdot \lambda ^{i}+(n-D-1)D\cdot \lambda ^{D}.$$
	Deriving the function and simplifying it gives us 
	$$
	f^{\prime}\left( D \right) = \frac{\lambda^{D}}{\left( \lambda-1 \right)^{2}}\left( AD^{2}+BD+C\right);
	$$
	where:
	\vskip 0,2cm
	$A= 2\lambda Log\lambda - Log\lambda - \lambda^{2} Log\lambda $;
	\vskip 0,4cm
	
	$B= 4\lambda-2-2\lambda^{2}+Log \lambda \left( \lambda-1+n-2\lambda n+\lambda^{2} n \right) $;
	\vskip 0,4cm
	
	$C=\lambda+n-1-2\lambda n+\lambda^{2} n-\lambda Log\lambda$.
	\vskip 0,4cm
	
	Stationary points are
	
	\begin{equation*}
	D_1=\frac{2-2\lambda+ \left[1+\left(\lambda-1 \right)n \right] Log \lambda+ \sqrt{S_{\lambda}}}{2 \left( \lambda-1\right)Log \lambda} 
	\end{equation*}%
	
	and
	
	\begin{equation*}
	D_2=\frac{2-2\lambda+ \left[1+\left(\lambda-1 \right)n \right] Log \lambda- \sqrt{S_{\lambda}}}{2 \left( \lambda-1\right)Log \lambda} 
	\end{equation*}%
	where
	$$
	S_{\lambda}=4 \left(\lambda-1 \right)^{2}+\left[ \left( n-1 \right)^{2}+ \lambda^{2}n^{2}-2\lambda\left( n^{2}-n+2\right)\right]Log\lambda^{2}.
	$$
	\vskip 0,4cm
	Let us analyze $f^{\prime}\left( D \right)$. Since $\frac{\lambda^{D}}{\left( \lambda-1 \right)^{2}}$ is always positive, whether the function $f\left( D \right)$ is increasing or decreasing depends on the second--degree polynomial. The leading coefficient $2\lambda Log\lambda - Log\lambda - \lambda^{2} Log\lambda > 0$ for $\lambda \in \left\langle 0,1 \right\rangle$.  We conclude that, under the assumption that $D_1, D_2 \in \mathbb{R}$,  the function $f\left( D \right)$ has minimal value for $D_1$ and maximal value for $D_2$.
\end{proof}

\begin{remark}
	\label{Min remark}
	Let us denote $D_{min}=D_1$. If $D_{min} \in \left[1,n-1\right]$ and real, than the minimum of expression $\sum\limits_{i=1}^{D}i\cdot \lambda ^{i}+(n-D-1)D\cdot \lambda ^{D}$  will be reached for some  $D \in \left\lbrace1,n-1,\left\lfloor
	D_{min}\right\rfloor, \left\lceil D_{min}\right\rceil  \right\rbrace$. Otherwise, it will be reached for $D \in \left\lbrace1,n-1  \right\rbrace$. This remark can simplify the calculation of the lower bound in Theorem \ref{MIN  Trans donja}.
\end{remark}

Now, let us concentrate on upper bound. We were able to find it in a special case when $\lambda \in \left\langle 0, \frac{1}{2} \right\rangle$.

\begin{theorem}
	\label{MIN Trans donja - SPECIAL}
	For each graph $G$ with $n$ vertices and for $\lambda \in \left\langle 0, \frac{1}{2} \right\rangle$ it holds%
	\begin{equation*}
	mt_{\lambda }^{e}(G) \leqslant \left( n-1 \right) \cdot \lambda.
	\end{equation*}%
	The lower bound is reached for any vertex of a complete graph.
\end{theorem}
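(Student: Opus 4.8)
The plan is to reduce the whole statement to an elementary one-variable observation: the function $g(d)=d\,\lambda^{d}$ is strictly decreasing in $d$ over the positive integers whenever $\lambda\in\left\langle 0,\tfrac12\right\rangle$. Indeed, $g(d+1)/g(d)=\tfrac{d+1}{d}\,\lambda\leqslant 2\lambda<1$ for every integer $d\geqslant 1$, so $g(1)>g(2)>g(3)>\cdots$, and in particular $g(d)\leqslant g(1)=\lambda$ for every integer $d\geqslant 1$. This monotonicity is the only nontrivial ingredient, and it is precisely here that the hypothesis $\lambda<\tfrac12$ is used.

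With this in hand the upper bound is immediate. Take any vertex $u\in V$ — for instance one attaining $mt_{\lambda}^{e}(G)$. Every other vertex $v$ satisfies $d(u,v)\geqslant 1$, hence
\begin{equation*}
t_{\lambda}^{e}(u)=\sum_{v\in V\setminus\{u\}}d(u,v)\,\lambda^{d(u,v)}=\sum_{v\in V\setminus\{u\}}g\bigl(d(u,v)\bigr)\leqslant (n-1)\,g(1)=(n-1)\lambda,
\end{equation*}
because the sum has $n-1$ terms, each at most $\lambda$. Since $mt_{\lambda}^{e}(G)\leqslant t_{\lambda}^{e}(u)$, the inequality of the theorem follows.

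For the extremal case, note that in the complete graph $K_{n}$ every vertex is at distance $1$ from all $n-1$ others, so $t_{\lambda}^{e}(u)=(n-1)\lambda$ for each $u$, and therefore $mt_{\lambda}^{e}(K_{n})=(n-1)\lambda$; thus the bound is attained. Conversely, if $G$ is not complete there is a vertex $u$ non-adjacent to some vertex $w$, so $d(u,w)\geqslant 2$, and strict monotonicity of $g$ gives $g\bigl(d(u,w)\bigr)<g(1)=\lambda$; the displayed estimate is then strict for that $u$, whence $mt_{\lambda}^{e}(G)<(n-1)\lambda$. So equality holds exactly for the complete graph.

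I do not anticipate a genuine obstacle: the argument is a single application of the decreasing behaviour of $d\,\lambda^{d}$. The one point requiring care is the restriction $\lambda<\tfrac12$, which is exactly what makes $g(2)=2\lambda^{2}<\lambda=g(1)$; for $\lambda\geqslant\tfrac12$ this fails already at $d=1$, the sum can exceed $(n-1)\lambda$, and the bound is false in general, consistent with the Remark.
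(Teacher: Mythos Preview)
Your proof is correct and follows essentially the same approach as the paper: both bound each term $d(u,v)\lambda^{d(u,v)}$ above by $\lambda$ using the monotonicity of $d\mapsto d\lambda^{d}$ for $\lambda<\tfrac12$, and then observe that equality holds for $K_{n}$. Your version is in fact a little more careful, supplying an explicit discrete ratio argument for the monotonicity and also establishing that $K_{n}$ is the \emph{unique} extremal graph, which the paper does not do.
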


\begin{proof}
	Let $G$ be a graph for which the maximum $mt_{\lambda }^{e}(G)$ is attained and let $u$ be a vertex of  the graph for which $t_{\lambda }^{e}(u)=mt_{\lambda }^{e}(G)$. It holds:%
	\begin{equation*}
	mt_{\lambda }^{e}(G)=t_{\lambda }^{e}(u)=\underset{v\in V\setminus\{u\}}{\sum }d(u,v)\cdot \lambda^{d(u,v)} \leqslant \underset{v\in V\setminus\{u\}}{\sum } \lambda= \left(n-1 \right) \cdot \lambda.
	\end{equation*}%
	
	The inequality holds since for $\lambda \in \left\langle 0, \frac{1}{2} \right\rangle$ function $f \left(x\right)=x \lambda^{x}$ is decreasing. The equality holds for a complete graph since $d \left( u,v \right)=1$ for all $u,v \in V$.
	
\end{proof}

Let us analyse the lower bound for $Mt_{\lambda}^{e}(G)$. We find it  only in the special case of 2--connected graph for $\lambda \in \langle 0, \frac{1}{2}\rangle $. Further we conjecture.

\label{MAX Trans donja - SPECIAL}

\begin{conjecture}
	
	For each graph $G$ with $n \geqslant 3$ vertices and for $\lambda \in \langle 0, \frac{1}{2} \rangle $ it holds that 
	\begin{equation}
	\left\{ 
	\begin{tabular}{c}
	$\frac{\sqrt{\lambda}\left[ 2\sqrt \lambda+ (n-1)\lambda^{1+\frac{n}{2}} -(n+1)\lambda^{\frac{n}{2}} \right]}{(\lambda-1)^{2}}$, $n$\textnormal{ odd} \\ \\
	$\frac{1}{2}n \lambda^{\frac{n}{2}}+\frac{\sqrt{\lambda}\left[ 2\sqrt \lambda+ (n-1)\lambda^{1+\frac{n}{2}} -(n+1)\lambda^{\frac{n}{2}} \right]}{(\lambda-1)^{2}}$, $n$\textnormal{ even}
	\end{tabular}\right\} \leqslant Mt_{\lambda }^{e}(G)
	\label{Ciklus}
	\end{equation}%
	The equality holds for a cycle.
\end{conjecture}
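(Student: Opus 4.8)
The plan is as follows. First I would identify the right‑hand side of \eqref{Ciklus}: it is precisely $\beta_n:=t_\lambda^e(u)$ for any vertex $u$ of the cycle $C_n$. Indeed, in $C_n$ the multiset of distances from $u$ consists of each value $i$ with $1\le i\le \lfloor n/2\rfloor-1$ taken twice, together with $\lfloor n/2\rfloor$ taken twice if $n$ is odd and once if $n$ is even; substituting this into $\sum_i n_i\,i\lambda^i$ and applying the closed form $\sum_{i=1}^{m} i\lambda^i=\frac{\lambda\left(1-(m+1)\lambda^{m}+m\lambda^{m+1}\right)}{(\lambda-1)^2}$ produces the two displayed formulas. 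The one elementary fact used throughout is that for $\lambda\in\langle 0,\frac12\rangle$ the function $i\mapsto i\lambda^i$ is strictly decreasing on the positive integers (already $\frac{(i+1)\lambda^{i+1}}{i\lambda^i}=\frac{i+1}{i}\lambda<1$ at $i=1$); hence, under a fixed total mass $\sum_i n_i=n-1$, transferring weight from a shorter distance to a longer one can only decrease $\sum_i n_i\,i\lambda^i$, and the largest occurring distance is the cheapest place to accumulate surplus vertices.

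The heart of the matter is the $2$‑connected case. Fix an arbitrary vertex $u$, set $E=\mathrm{ecc}(u)$, let $L_1,\dots,L_E$ be the BFS layers around $u$ and $n_i=|L_i|$. If some $L_j$ with $j<E$ were a single vertex $\{w\}$ then, because every edge of $G$ joins vertices in equal or consecutive layers, every path from $u$ to a vertex at distance $>j$ (which exists since $j<E$) would pass through $w$, so $w$ would be a cut vertex — impossible when $G$ is $2$‑connected. Hence $n_i\ge 2$ for $i<E$ and $n_E\ge1$, so $2(E-1)+1\le n-1$, i.e. $E\le\lfloor n/2\rfloor$. Writing $n_i=2+m_i$ for $i<E$ and $n_E=1+m_E$ with $m_i\ge0$ and $\sum_i m_i=n-2E$, and using $i\lambda^i\ge E\lambda^E$ for $i\le E$,
\[
t_\lambda^e(u)=\Big(2\sum_{i=1}^{E-1}i\lambda^i+E\lambda^E\Big)+\sum_{i=1}^{E}m_i\,i\lambda^i\;\ge\;2\sum_{i=1}^{E-1}i\lambda^i+(n-2E+1)\,E\lambda^E=:\phi(E).
\]
A direct computation gives $\phi(E+1)-\phi(E)=(n-2E-1)\,\lambda^E\big((E+1)\lambda-E\big)\le0$ for $1\le E\le\lfloor n/2\rfloor-1$, since $n-2E-1\ge1>0$ there and $(E+1)\lambda-E<0$ because $\lambda<\frac12\le\frac{E}{E+1}$. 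So $\phi$ is nonincreasing on $[1,\lfloor n/2\rfloor]$, whence $t_\lambda^e(u)\ge\phi(E)\ge\phi(\lfloor n/2\rfloor)=\beta_n$. As this holds for every vertex, $Mt_\lambda^e(G)\ge\beta_n$, and $C_n$ attains equality since each of its vertices realises precisely the extremal distance distribution (two vertices at every distance below $\lfloor n/2\rfloor$, the remaining surplus at distance $\lfloor n/2\rfloor$).

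It remains to drop the $2$‑connectedness assumption, and this is the real obstacle. The goal is then to exhibit, in every connected $G$ with a cut vertex, a single vertex of exponential transmission at least $\beta_n$. Two complementary devices suggest themselves. If $\mathrm{diam}(G)=D$ is large, the midpoint $x_{\lfloor D/2\rfloor}$ of a diametral path $x_0x_1\cdots x_D$ already has two arms along the path and satisfies $t_\lambda^e\big(x_{\lfloor D/2\rfloor}\big)\ge\sum_{i=1}^{\lfloor D/2\rfloor}i\lambda^i+\sum_{i=1}^{D-\lfloor D/2\rfloor}i\lambda^i$, which reaches $\beta_n$ when $D$ is near $n-1$ (and equals $\beta_n$ exactly for a path of odd order, so the path is also extremal). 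If $D$ is small, a Moore‑type count forces the maximum degree $\Delta$ to be large, and the corresponding high‑degree vertex then has $t_\lambda^e\ge\Delta\lambda+(\text{further positive terms})\ge\beta_n$. A possible alternative is induction on the block structure: peel off a leaf block down to its cut vertex, invoke the bound for the resulting smaller graph, and control the effect of reattaching the block. The difficulty — and the reason this is still only a conjecture — is that neither estimate is tight across the whole range of $D$: unlike $2$‑connectivity, which simultaneously bounds the eccentricity by $n/2$ and guarantees two vertices in each inner layer, there is no single elementary mechanism covering graphs with cut vertices, and the mid‑range of diameters is where a genuinely new idea appears to be needed.
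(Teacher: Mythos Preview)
Your treatment of the $2$-connected case is correct and matches what the paper actually proves: the statement is labelled a Conjecture, and the paper only establishes it under the additional hypothesis that $G$ is $2$-connected (their Remark~2). Both arguments rest on the same two facts --- that in a $2$-connected graph every BFS layer below the eccentricity has size at least $2$ (hence $E\le\lfloor n/2\rfloor$), and that $i\mapsto i\lambda^i$ is strictly decreasing for $\lambda<\tfrac12$ --- and then minimise $\sum_i n_i\,i\lambda^i$ over the admissible layer profiles to land on the cycle distribution.

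The packaging differs slightly. The paper isolates a combinatorial lemma (their Lemma~5) about sequences $(x_1,\dots,x_{\lfloor n/2\rfloor})$ with $x_i\ge2$ on an initial segment and proves minimality by a one-step swap $x_k\to x_k-1$, $x_{k+1}\to x_{k+1}+1$; it then plugs the layer profile into this lemma. You instead bound $t_\lambda^e(u)$ below by the explicit function $\phi(E)=2\sum_{i=1}^{E-1}i\lambda^i+(n-2E+1)E\lambda^E$ and show $\phi$ is nonincreasing on $[1,\lfloor n/2\rfloor]$ via the telescoping difference $\phi(E+1)-\phi(E)=(n-2E-1)\lambda^E\big((E+1)\lambda-E\big)\le0$. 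Your route is marginally cleaner in one respect: it transparently allows the top layer to have size~$1$, whereas the paper's Lemma~5 as stated requires $x_i\ge2$ on the whole support, so your version sidesteps a small technical wrinkle in applying that lemma directly.

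Your final paragraph correctly identifies the general case (graphs with cut vertices) as the open part; the paper does not attempt it either, and your sketched attacks (diametral midpoint, Moore-type degree bound, block induction) are reasonable heuristics but, as you say, do not close the gap.
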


\begin{remark}
	\label{2-povezan}
	The previous conjecture is true in the special case when $G$ is a 2--connected graph. To prove this we need the following lemma.
\end{remark}

\begin{lemma}
	\label{Tanja}
	Let $n \geqslant 3$. Let $\lambda \in \left\langle 0, \frac{1}{2} \right\rangle $ and let $S$ be a set of
	sequences $\left( x_{1},x_{2},...,x_{\left\lfloor n/2\right\rfloor }\right) \in {\mathbb{N}}^{\left\lfloor
		n/2\right\rfloor } $
	such that $x_{1}+x_{2}+...+x_{\left\lfloor n/2\right\rfloor }=n-1$ and there
	exists $k\in \left\{ 1,...,\left\lfloor n/2\right\rfloor \right\} $ such
	that $x_{i}\geq 2$ for each $i\leq k$ and $x_{i}=0$ for each $i>k.$ Let $%
	S^{\prime }$ be the set of sequences in $S$ of the form $\left(
	x_{1},x_{2},...,x_{\left\lfloor n/2\right\rfloor }\right) $ such that there
	is $k\in \left\{ 1,...,\left\lfloor n/2\right\rfloor \right\} $ such that $%
	x_{k}\in \left\{ 0,1\right\} ,$ $x_{i}=2$ for each $1\leq i<k$ and $x_{i}=0$
	for each $i>k.$
	
	Let $T_{n}$ be defined by%
	\[
	T_{n}\left( x_{1},x_{2},...,x_{\left\lfloor n/2\right\rfloor }\right)
	=\sum_{i=1}^{\left\lfloor n/2\right\rfloor }x_{i}\cdot i \cdot \lambda^{i}. 
	\]
	
	Then%
	\[
	\min \left\{ T_{n}\left( s\right) :s\in S\right\} =\min \left\{ T_{n}\left(
	s\right) :s\in S^{\prime }\right\} .
	\]%
	Furthermore, minimal value of $T_{n}$ in $S^{\prime }$ is:%
	
	\begin{equation*}
	\left\{ 
	\begin{tabular}{c}
	$2\cdot \overset{\frac{n-1}{2}}{\underset{i=1}{\sum }}i \cdot \lambda^{i}$, $n$%
	\textnormal{\ odd} \\ 
	$2 \cdot \overset{\frac{n-2}{2}}{\underset{i=1}{\sum }}i \cdot \lambda^{i}+\left( 
	\frac{n}{2}\right)\cdot \lambda ^{\frac{n}{2}}$, $n$\textnormal{\ even}%
	\end{tabular}%
	\right\}
	\end{equation*}%
	
\end{lemma}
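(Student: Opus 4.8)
The plan is to establish the identity $\min\{T_n(s):s\in S\}=\min\{T_n(s):s\in S'\}$ by a local exchange (rearrangement) argument, and then to evaluate the minimum over $S'$ by a direct comparison of the finitely many candidate sequences. First I would observe that $S'\subseteq S$, so one inequality is trivial; the work is to show that any minimiser over $S$ can be transformed into a member of $S'$ without increasing $T_n$. Fix $s=(x_1,\dots,x_{\lfloor n/2\rfloor})\in S$ with support $\{1,\dots,k\}$ (so $x_i\ge 2$ for $i\le k$, $x_i=0$ for $i>k$). The key observation is that the coefficient $i\lambda^i$ is the value at $i$ of the function $g(i)=i\lambda^i$, which for $\lambda\in\langle 0,\tfrac12\rangle$ is strictly decreasing on $i\ge 1$ (since $g(i+1)/g(i)=\tfrac{i+1}{i}\lambda\le 2\lambda<1$); this is exactly the fact already used in Theorem \ref{MIN Trans donja - SPECIAL}. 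Hence moving a unit of ``mass'' from a larger index to a smaller index strictly decreases $T_n$, as long as the smaller index still satisfies its lower bound constraint.

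Concretely, suppose $s\notin S'$. Then there is some index $j<k$ with $x_j\ge 3$ while some later index carries mass, or more precisely there is a smallest index $j$ at which $x_j\ne 2$ with $j<k$; by the support condition $x_j\ge 2$, so $x_j\ge 3$. I would then decrease $x_j$ by $1$ and increase $x_k$ by $1$ if that keeps $x_k\ge 2$; if instead $x_k=2$ and decreasing it would violate the constraint, I instead move the unit from index $k$ down to index $j$ — wait, that is the wrong direction. The clean way: take the unit at the \emph{largest} occupied index $k$ (which has $x_k\ge 2$) and move one unit down to the largest index $j<k$ with $x_j\ge 3$ (if no such $j$ exists, we are already at a sequence with all interior entries equal to $2$, i.e. in $S'$ up to the value of the last entry). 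Since $g$ is decreasing, $T_n$ strictly decreases, the sum $n-1$ is preserved, $2\le x_k-1$ fails only when $x_k=2$, in which case index $k$ empties and $k$ decreases by one — the support shrinks to a prefix, the constraints are preserved, and we repeat. This process terminates because $\sum_i i\,x_i$ (or the support size plus $\sum x_i^2$) strictly decreases at each step, and the terminal sequences are precisely those in $S'$: a prefix of $2$'s followed by a single entry in $\{0,1\}$ followed by zeros. Hence the minimum over $S$ equals the minimum over $S'$.

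For the second part I would enumerate $S'$. A sequence in $S'$ is determined by the cutoff $k$ and the value $x_k\in\{0,1\}$, subject to $x_1+\dots+x_k=n-1$ with $x_1=\dots=x_{k-1}=2$, i.e. $2(k-1)+x_k=n-1$. When $n$ is odd, $x_k=1$ forces $2(k-1)=n-2$, not an integer solution unless... let me instead note $n-1$ even iff $n$ odd: if $n$ is odd then $n-1$ is even, so $x_k=0$ and $k-1=\tfrac{n-1}{2}$, giving the single sequence $(2,2,\dots,2,0,\dots)$ with $\tfrac{n-1}{2}$ twos, whence $T_n=2\sum_{i=1}^{(n-1)/2} i\lambda^i$. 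If $n$ is even then $n-1$ is odd, so $x_k=1$ and $k-1=\tfrac{n-2}{2}$, $k=\tfrac n2$, giving $T_n=2\sum_{i=1}^{(n-2)/2} i\lambda^i + \tfrac n2\,\lambda^{n/2}$. (One should check these are genuinely the minimum over the few $S'$-sequences when several exist, but in fact the parity constraint pins down the sequence uniquely, so no further comparison is needed.)

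The main obstacle I anticipate is making the exchange argument fully rigorous — specifically, verifying that the ``move mass down'' operation always stays inside $S$ (the constraints $x_i\ge 2$ for $i\le k$, $x_i=0$ for $i>k$, and total $n-1$) and choosing a monovariant that provably forces termination; the direction of the move (always from the top of the support) and the bookkeeping when the top index empties out are the delicate points. Everything else — the monotonicity of $i\lambda^i$, which is inherited from the earlier theorem, and the closed-form evaluation, which is just solving $2(k-1)+x_k=n-1$ in integers — is routine. I would also remark that the hypothesis $n\ge 3$ guarantees $\lfloor n/2\rfloor\ge 1$ so that $S$ and $S'$ are nonempty.
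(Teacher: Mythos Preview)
Your exchange argument has the direction reversed, and this is a genuine gap. You correctly note that $g(i)=i\lambda^{i}$ is strictly decreasing on the positive integers for $\lambda\in\langle 0,\tfrac12\rangle$, but the conclusion you draw is backwards: since smaller indices carry the \emph{larger} weights $g(i)$, moving one unit of mass from index $k$ to a smaller index $j<k$ changes $T_n$ by $g(j)-g(k)>0$, so $T_n$ \emph{increases}. Thus your ``clean way'' (take a unit from the top index $k$ and deposit it at some $j<k$) drives $T_n$ up, and gives no contradiction with minimality. Your first instinct --- decrease $x_j$ by one at an index $j$ with $x_j\ge 3$ and increase $x_k$ by one at a larger index --- was in fact the correct direction; you abandoned it for the wrong one. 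The paper's proof does exactly that correct move: if the minimiser has some $x_k>2$, it replaces $s$ by $(x_1,\dots,x_k-1,x_{k+1}+1,\dots)$ and computes
\[
T_n(s)-T_n(s')=k\lambda^{k}-(k+1)\lambda^{k+1}>0,
\]
contradicting minimality. Your feasibility bookkeeping is also off: when $x_k=2$ and you remove one unit, $x_k$ becomes $1$, not $0$, so the top index does not ``empty'' and the resulting sequence is no longer in $S$. The right move avoids this because one subtracts only from an index with $x_k\ge 3$, so $x_k-1\ge 2$ remains admissible.

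Your second part --- solving $2(k-1)+x_k=n-1$ with $x_k\in\{0,1\}$ to pin down the unique $S'$ sequence by parity, and reading off the closed form --- is fine and matches the paper's conclusion.
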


\begin{proof}
	Suppose to the contrary. Let $\left( x_{1},x_{2},...,x_{\left\lfloor
		n/2\right\rfloor }\right) \notin S^{\prime }$ minimize $T_{n}$ in $S.$ Then
	there is $k$ such that $x_{k}>2$. Note that $k<\left\lfloor
	n/2\right\rfloor .$ Then, 
	\[
	\left( x_{1},x_{2},...,x_{k}-1,x_{k+1}+1,x_{k+2},...,x_{\left\lfloor
		n/2\right\rfloor }\right) \in S. 
	\]%
	It follows that%
	\begin{eqnarray*}
		0 & \geqslant &T_{n}\left( x_{1},x_{2},...,x_{\left\lfloor n/2\right\rfloor
		}\right) -T_{n}\left(
		x_{1},x_{2},...,x_{k}-1,x_{k+1}+1,x_{k+2},...,x_{\left\lfloor
			n/2\right\rfloor }\right)  \\
		&=&k \lambda^{k}-(k+1) \lambda^{k+1} > 0,
	\end{eqnarray*}%
	which is a contradiction. So the sequence $s\in S^{\prime }$ that minimizes $%
	T_{n}$ is $(2,2,...,2,0)$ for $n$ odd, and $(2,2,...2,1)$ for $n$ even. It
	can be easily seen that the value of $T_{n}$ for those sequences is $2\cdot \overset{\frac{n-1}{2}}{\underset{i=1}{\sum }}i \cdot \lambda^{i}=\frac{\sqrt{\lambda}\left[ 2\sqrt \lambda+ (n-1)\lambda^{1+\frac{n}{2}} -(n+1)\lambda^{\frac{n}{2}} \right]}{(\lambda-1)^{2}}$ in the first case, and $2 \cdot \overset{\frac{n-2}{2}}{\underset{i=1}{\sum }}i \cdot \lambda^{i}+\left( 
	\frac{n}{2}\right)\cdot \lambda ^{\frac{n}{2}}=\frac{1}{2}n \lambda^{\frac{n}{2}}+\frac{\sqrt{\lambda}\left[ 2\sqrt \lambda+ (n-1)\lambda^{1+\frac{n}{2}} -(n+1)\lambda^{\frac{n}{2}} \right]}{(\lambda-1)^{2}}$ in the second case.
\end{proof}

\begin{proof}[Proof of Remark \ref{2-povezan}]
	Let us denote the left--hand side of the inequality \eqref{Ciklus} by $cyc_\lambda(n)$ and assume the contrary--that there exists a 2--connected graph $G$ with $n$ vertices such that $Mt_{\lambda}^{e}(G) < cyc_\lambda(n)$. This implies that $t_{\lambda}^{e}(u) < cyc_\lambda(n)$, for all $u\in V$. Therefore:
	$$
	\sum_{u\in V}t_{\lambda}^{e}(u) < n\cdot cyc_\lambda(n),
	$$
	and thus there exists $w\in V$ such that $t_{\lambda}^{e}(w) < cyc_\lambda(n)$.
	
	Let $w_{1}$ be the vertex that is farthest from $w$ and let $d(w,w_{1})=D.$
	Since $G$ is 2-connected it holds that for every $d<D$ there are at least $2$
	vertices on a distance $d$ from $w$. From that it is easily seen that $D\leq
	\left\lfloor \dfrac{n}{2}\right\rfloor .$ Let us denote with $x_{i}$ the
	number of vertices on a distance $i$ from $w$, and let us observe the
	sequence $(x_{1},...,x_{\left\lfloor n/2\right\rfloor })$. This sequence is
	obviously in $S$ defined in Lemma \ref{Tanja}. It follows $ cyc_\lambda(n) \leqslant t_{\lambda }^{e}(w)$ which is a contradiction.
\end{proof}

For the upper bound let us prove:
\begin{theorem}
	\label{MAX Trans gornja}
	For each graph $G$ with $n$ vertices it holds%
	\begin{equation*}
	Mt_{\lambda }^{e}(G)\leqslant \underset{1\leqslant D\leqslant n-1}{\max} {\frac{\lambda \left[ 1 -(D+1) \lambda  ^D+D\lambda ^{D+1} \right]}{(\lambda -1)^2}+(n-D-1)D\cdot \lambda ^{D}}
	\end{equation*}%
	The equality hold for a broom (in its starting vertex).
\end{theorem}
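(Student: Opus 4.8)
The plan is to mirror the structure of the proof of Theorem \ref{MIN Trans donja} (the lower bound for $mt_{\lambda}^{e}$), but now pushing the inequality in the opposite direction. First I would take a graph $G$ and a vertex $u$ with $t_{\lambda}^{e}(u)=Mt_{\lambda}^{e}(G)$, let $v_D$ be a vertex farthest from $u$ with $D=d(u,v_D)$, fix a shortest path $S=uv_1v_2\cdots v_D$, and let $W=\{w_1,\dots,w_k\}$ with $k=n-D-1$ be the remaining vertices off the path. Then
\begin{equation*}
Mt_{\lambda}^{e}(G)=\sum_{i=1}^{D}i\cdot\lambda^{i}+\sum_{w\in W}d(u,w)\cdot\lambda^{d(u,w)}.
\end{equation*}
Since each $w\in W$ satisfies $1\leqslant d(u,w)\leqslant D$, I would bound each term $d(u,w)\cdot\lambda^{d(u,w)}$ from above by $\max_{1\leqslant j\leqslant D}j\lambda^{j}$; calling $x$ a distance attaining this maximum gives
\begin{equation*}
Mt_{\lambda}^{e}(G)\leqslant\sum_{i=1}^{D}i\cdot\lambda^{i}+(n-D-1)x\cdot\lambda^{x}.
\end{equation*}

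Next I would argue, as in Theorem \ref{MIN Trans donja} but with the reverse rearrangement, that the right-hand side is largest when $x=D$. Concretely, if $x<D$ one can move $v_D$ (and the tail of the path, if $D$ is not the only far vertex — more carefully, one reconfigures so that the off-path vertices sit at distance $D$ rather than $x$) to obtain a graph whose relevant sum is at least as large; alternatively, since for fixed $D$ the quantity $\sum_{i=1}^{D}i\lambda^{i}+(n-D-1)x\lambda^{x}$ is a sum over the $n-D-1$ off-path vertices each contributing at most $D\lambda^{D}$ once we also know a broom on $D$ realizes distance exactly $D$ for all of $W$, we get
\begin{equation*}
Mt_{\lambda}^{e}(G)\leqslant\sum_{i=1}^{D}i\cdot\lambda^{i}+(n-D-1)D\cdot\lambda^{D}.
\end{equation*}
Then I would use the closed form $\sum_{i=1}^{D}i\lambda^{i}=\dfrac{\lambda\left[1-(D+1)\lambda^{D}+D\lambda^{D+1}\right]}{(\lambda-1)^{2}}$ (the standard identity already invoked in Lemma \ref{Tanja}) and take the maximum over the admissible range $1\leqslant D\leqslant n-1$, which yields exactly the stated bound. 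Finally, for sharpness, I would verify directly that for the broom $B_{n,k}$ with path-length $D=n-k-1$, the transmission of the starting vertex equals $\sum_{i=1}^{D}i\lambda^{i}+(n-D-1)D\lambda^{D}$, so choosing $D$ to be the maximizer produces a graph attaining the bound.

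The main obstacle is the step asserting that we may assume $x=D$, i.e. that concentrating all off-path vertices at the eccentricity-distance $D$ only increases $t_{\lambda}^{e}(u)$. Unlike the minimization case — where $f(x)=x\lambda^{x}$ being decreasing on $\langle 0,\tfrac12\rangle$ forces the minimum to the far end — here $f$ is not monotone on all of $\langle 0,1\rangle$, so $x\lambda^{x}\leqslant D\lambda^{D}$ is \emph{not} automatic; one genuinely needs the rewiring argument (relocating a farthest vertex and its pendant neighbours) to show that any configuration is dominated by a broom. I would therefore be careful to phrase the surgery so that it simultaneously (i) keeps the graph connected, (ii) does not decrease $D$, and (iii) does not decrease the sum, and to note that after exhausting such moves the extremal graph is a broom with the remaining freedom being the single parameter $D$, over which we then optimize. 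A secondary, purely bookkeeping point is checking that the boundary cases $D=1$ (star, giving $(n-1)\lambda$) and $D=n-1$ (path) are included in the max, so that the displayed formula is literally correct without side conditions.
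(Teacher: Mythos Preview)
Your proposal follows essentially the same route as the paper's proof: both fix a vertex $u$ realising $Mt_\lambda^e(G)$, take a shortest path to a farthest vertex $v_D$, split $t_\lambda^e(u)$ into the path contribution $\sum_{i=1}^{D}i\lambda^{i}$ and the off-path contribution, bound the latter by $(n-D-1)\,x\lambda^{x}$ for the maximising distance $x$, and then argue by the surgery ``remove $v_D$ and reattach it to $v_{x-1}$'' that one may take $x=D$, after which the closed form and the maximisation over $D$ finish the job. The caution you raise about the $x=D$ step (non-monotonicity of $j\mapsto j\lambda^{j}$) is warranted, but the paper handles it with exactly the same rewiring argument you propose, at the same level of detail.
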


\begin{proof}
	Let $G$ be a graph for which the maximum $Mt_{\lambda }^{e}(G)$ is attained and let $u$ be a vertex of  the graph for which $t_{\lambda }^{e}(u)=Mt_{\lambda }^{e}(G)$. Let $v_D$ be a vertex which is farthest away from $u$ and let $S=uv_1v_2...v_D$ be a shortest path from $u$ to $v_D$. Furthermore, let $k=n-D-1$ and let $W=\{w_1,w_2,...,w_k\}$ be a set of all vertices that do not lie on the path $S$. Since $d(u,v_i)=i$ for all  $i\in \{1,2,...,D\}$ we have: 
	\begin{equation*}
	Mt_{\lambda}^{e}(G)=t_{\lambda}^{e}(u)=\sum\limits_{i=1}^{D}i\cdot \lambda ^{i}+\sum\limits_{w \in W}d(u,w)\cdot \lambda ^{d(u,w)}.
	\end{equation*}
	For positive numbers $a_1,a_2,...,a_n$ such that $a=max \{a_1,a_2,...,a_n\}$ it holds 
	\begin{equation*}
	\sum\limits_{i=1}^{n}a_i\leqslant \sum\limits_{i=1}^{n}a=n\cdot a, 
	\end{equation*}
	so
	\begin{equation*}
	Mt_{\lambda}^{e}(G)=\sum\limits_{i=1}^{D}i\cdot \lambda ^{i}+\sum\limits_{w \in W}d(u,w)\cdot \lambda ^{d(u,w)}\leqslant \sum\limits_{i=1}^{D}i\cdot \lambda ^{i}+(n-D-1)x\cdot \lambda ^{x}
	\end{equation*}
	where $x=d(u,q)$ for some $q \in W$ for which the expression $d(u,q)\cdot \lambda ^{d(u,q)}$ has maximal value.
	
	This means that the transmission will be maximal if all the vertices in $W$ are equally distant from $u$. We will prove that, in that case, $x=D$. Suppose that is not the case. Let us observe graph $G'$ which is obtained by removing vertex $v_D$ and connecting it to $v_{x-1}$. Graph  $G'$ will have larger transmission than $G$ which is a contradiction. We conclude that one of the graphs for which the transmission is maximal is a broom, i.e., all the vertices in $W$ are directly connected to $v_{D-1}$.
	
\end{proof}

\begin{remark}
	\label{max remark}
	Let us denote $D_2$ in Theorem \ref{Minimum izraza} as $D_{max}$.  If $D_{max} \in \left[ 1,n-1\right]$ and real, than the maximum of expression $\sum\limits_{i=1}^{D}i\cdot \lambda ^{i}+(n-D-1)D\cdot \lambda ^{D}$  will be reached for some  $D \in \left\lbrace1,n-1,\left\lfloor
	D_{max}\right\rfloor, \left\lceil D_{max}\right\rceil  \right\rbrace$. Otherwise, it will be reached for $ D \in \left\lbrace1,n-1 \right\rbrace$. This remark can simplify the calculation of the lower bound in Theorem \ref{MAX Trans gornja}.
	
\end{remark}

\section{Betweenness Centrality}

\begin{lemma}
	\label{Max Cent je stablo}
	For all $\lambda \in \left\langle 0,1 \right\rangle$ and for a given integer $n$, among all graphs with $n$ vertices, any graph $G$ for which maximum $c_{\lambda}^{e}(G)$ is obtained is a tree.
\end{lemma}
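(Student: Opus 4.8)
The plan is to bound $c_\lambda^e(u)$ for an arbitrary vertex $u$ of an arbitrary $n$-vertex graph, obtain a bound that depends only on $n$ and $\lambda$, and show it is attained only by the star (which is a tree). First I would fix a graph $G$ realizing the maximum and a vertex $u$ with $c_\lambda^e(u)=Mc_\lambda^e(G)$, and rewrite $c_\lambda^e(u)$ by the change of order of summation used in the proof of Theorem~\ref{Jednake sume}:
\[
c_\lambda^e(u)=\sum_{v\in[u]}\sum_{\{k,l\}}\frac{s_{uv}^{kl}}{s^{kl}}\lambda^{d(k,l)}=\sum_{\{k,l\}}\frac{\lambda^{d(k,l)}}{s^{kl}}\sum_{v\in[u]}s_{uv}^{kl}.
\]
For a fixed pair $\{k,l\}$ the inner sum $\sum_{v\in[u]}s_{uv}^{kl}$ counts, over all shortest $k$--$l$ paths, the number of edges of the path incident to $u$; since a shortest path is simple this number is $2$ when $u$ is an interior vertex of the path, $1$ when $u\in\{k,l\}$, and $0$ otherwise. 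Writing $\sigma_{kl}(u)$ for the number of shortest $k$--$l$ paths through $u$, this gives
\[
c_\lambda^e(u)=\sum_{l\ne u}\lambda^{d(u,l)}+2\sum_{\{k,l\}\subseteq V\setminus\{u\}}\lambda^{d(k,l)}\frac{\sigma_{kl}(u)}{s^{kl}}.
\]

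Next I would bound each sum termwise, using $0<\lambda<1$. In the first sum $d(u,l)\ge1$, so each term is at most $\lambda$, with equality iff $u\sim l$. In the second sum: if $k\sim l$ then $d(k,l)=1$, so no shortest $k$--$l$ path can pass through $u$ (such a path has length at least $2$) and the term is $0$; if $k\not\sim l$ then $\lambda^{d(k,l)}\le\lambda^2$ and $\sigma_{kl}(u)/s^{kl}\le1$, so the term is at most $\lambda^2$, with equality iff $d(k,l)=2$ and all shortest $k$--$l$ paths run through $u$. Summing over the $n-1$ vertices $l$ and the $\binom{n-1}{2}$ pairs $\{k,l\}$ gives an upper bound on $c_\lambda^e(u)$ depending only on $n$ and $\lambda$; all these inequalities become equalities at the centre of the star $S_n$, so this bound equals $Mc_\lambda^e(S_n)$, and therefore $Mc_\lambda^e(G)\le Mc_\lambda^e(S_n)$ for every graph $G$.

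Finally I would read off the equality case to pin down all maximizers. If $G$ attains the bound then every term of the first sum equals $\lambda$, so $u$ is adjacent to every other vertex; and every term of the second sum equals $\lambda^2$, which for two neighbours $k,l$ of $u$ forces $k\not\sim l$. Hence $V\setminus\{u\}$ is an independent set all of whose vertices are joined to $u$, i.e.\ $G=S_n$, which is a tree. Thus every graph realizing $\max_G Mc_\lambda^e(G)$ is the star, and in particular a tree.

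The step I expect to require the most care is the evaluation of $\sum_{v\in[u]}s_{uv}^{kl}$: the case split according to the position of $u$ on a shortest $k$--$l$ path, and the observation that an interior $u$ is met by exactly two edges of the path. One must also make the equality analysis exhaustive, so that ``maximizer $\Rightarrow$ star $\Rightarrow$ tree'' is genuinely forced rather than merely ``some tree is optimal''. The tempting shortcut of taking a maximizer that contains a cycle and deleting a cycle edge does not obviously work, because deleting an edge can only lengthen distances and hence shrink the weights $\lambda^{d(k,l)}$, so monotonicity of $Mc_\lambda^e$ under edge deletion is unclear; the global termwise bound above seems to be the robust route.
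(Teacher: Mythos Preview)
Your argument is correct and in fact establishes more than the lemma asks: you pin down the star $S_n$ as the \emph{unique} maximizer, so Theorem~\ref{MAX Cent gornja} drops out at the same time. The decomposition
\[
c_\lambda^e(u)=\sum_{l\ne u}\lambda^{d(u,l)}+2\sum_{\{k,l\}\subseteq V\setminus\{u\}}\frac{\sigma_{kl}(u)}{s^{kl}}\,\lambda^{d(k,l)}
\]
and the termwise bound are both fine, and the equality analysis is exhaustive.

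The paper's route is different and more structural: it fixes a hypothetical non-tree maximizer $G$ with extremal vertex $u$, passes to a BFS (Dijkstra) spanning tree $G'$ rooted at $u$, and argues that $c_\lambda^e(u)$ does not decrease; the explicit value is then computed only later, in Theorem~\ref{MAX Cent gornja}, already knowing the optimum is a tree. Your closing caveat---that deleting edges can lengthen $d(k,l)$ and hence shrink the weights---is precisely the delicate step in that argument, but it does go through for this particular spanning tree. The first sum is unchanged because a BFS tree preserves all distances from $u$. For the second sum, if a pair $\{k,l\}$ contributes positively in $G$ then $\sigma_{kl}(u)>0$, so $d_G(k,l)=d_G(k,u)+d_G(u,l)=d_{G'}(k,u)+d_{G'}(u,l)$; since $d_{G'}(k,l)\ge d_G(k,l)$ and the triangle inequality gives $d_{G'}(k,l)\le d_{G'}(k,u)+d_{G'}(u,l)$, equality holds, so in the tree $G'$ the unique $k$--$l$ path passes through $u$ and has the \emph{same} length as in $G$. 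Thus the weight is unchanged while the fraction rises to $1$; pairs that contributed $0$ in $G$ can only gain. So the spanning-tree shortcut you set aside is exactly the paper's proof, and it is sound. Your global bound has the advantage of delivering the extremal graph and its value in one stroke; the paper's argument buys a clean reduction to trees that can be reused (as it is in Theorems~\ref{MAX Net obe} and~\ref{MAX Surplus obe}).
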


\begin{proof}
	Let $G$ be a graph such that $c_{\lambda }^{e}(G)$ is maximal and let $u$ be
	a vertex for witch maximal centrality is reached. We will prove that $G$
	is a tree.
	
	Suppose that is not the case. Let us observe Dijkstra spanning tree $G^{\prime } $ that
	is obtained as follows. Starting from vertex $u$, in each step we choose a
	vertex $v$ that is closest to $u$ (the distance between $u$ and $v$ is
	minimal) and is still outside the tree. Since $G^{\prime } $ is a tree, it
	holds that $\frac{s_{uv}^{kl}}{s^{kl}}=1$ for each $k,l\in V$ \ that are
	connected by a path passing through the edge $uv.$ From the way $G^{\prime }$
	was obtained, it is obvious that the distances between $u$ and $v$, for
	every $v\in V$, will stay the same. This means that $c_{\lambda }^{e}(u)$
	is greater in $G^{\prime }$ than in $G$ which contradicts our assumption.
\end{proof}

\begin{lemma}
	\label{Cent za put}
	For each graph $G$ with $n$ vertices and for $\lambda \in \left\langle0,1 \right\rangle$ it holds%
	\begin{equation*}
	\underset{v\in V\setminus\{u\}}{\sum}\lambda^{d(u,v)}\geqslant\sum\limits_{i=1}^{n-1}\lambda ^{i}=\frac{\lambda^D-\lambda}{\lambda -1}.
	\end{equation*}%
\end{lemma}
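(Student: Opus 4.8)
The plan is to read the left-hand side as the vertices of $G$ graded by their distance from $u$, weighted by the strictly decreasing function $k\mapsto\lambda^{k}$ (strictly decreasing since $\lambda\in\left\langle 0,1\right\rangle$). Concretely, for each integer $k\geqslant 1$ let $c_k$ denote the number of vertices $v$ with $d(u,v)=k$, and let $m=\mathrm{ecc}(u)$ be the eccentricity of $u$; then $\sum_{v\in V\setminus\{u\}}\lambda^{d(u,v)}=\sum_{k=1}^{m}c_k\lambda^{k}$ and $\sum_{k=1}^{m}c_k=n-1$.

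The one place where connectivity of $G$ enters is the observation that $c_k\geqslant 1$ for every $k$ with $1\leqslant k\leqslant m$: taking a vertex $w$ with $d(u,w)=m$ together with a shortest path from $u$ to $w$, the vertex of that path at distance $k$ from $u$ witnesses $c_k\geqslant 1$. Since $c_1,\dots,c_m$ are $m$ integers each at least $1$ and summing to $n-1$, this also forces $m\leqslant n-1$.

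Next I would peel one unit off each $c_k$, writing $\sum_{k=1}^{m}c_k\lambda^{k}=\sum_{k=1}^{m}\lambda^{k}+\sum_{k=1}^{m}(c_k-1)\lambda^{k}$. The surplus coefficients $c_k-1\geqslant 0$ sum to $(n-1)-m$, and each multiplies a weight $\lambda^{k}\geqslant\lambda^{m}$, so $\sum_{k=1}^{m}(c_k-1)\lambda^{k}\geqslant (n-1-m)\lambda^{m}$; moreover $(n-1-m)\lambda^{m}\geqslant\sum_{k=m+1}^{n-1}\lambda^{k}$, since the right-hand side is a sum of exactly $n-1-m$ terms each strictly below $\lambda^{m}$. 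Adding back $\sum_{k=1}^{m}\lambda^{k}$ gives $\sum_{v\in V\setminus\{u\}}\lambda^{d(u,v)}\geqslant\sum_{i=1}^{n-1}\lambda^{i}$, and the geometric-series identity puts the right-hand side in the stated closed form. Equality forces $m=n-1$ and every $c_k=1$, i.e. exactly one vertex at each of the distances $1,\dots,n-1$, which is precisely an end-vertex of $P_n$.

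I do not expect a serious obstacle: the argument is a monotone rearrangement. The only point needing care is the accounting in the previous paragraph — that removing one representative of every realised distance leaves exactly $(n-1)-m$ further vertices, and that relocating all of their weight up to the largest value $\lambda^{m}$ gives a legitimate lower bound that still dominates the missing tail $\lambda^{m+1}+\cdots+\lambda^{n-1}$.
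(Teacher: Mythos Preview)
Your argument is correct. The approach, however, is genuinely different from the paper's.

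The paper argues by graph transformation: starting from $G$, it picks a longest shortest path from $u$, then repeatedly detaches an off-path vertex $w$ and reattaches it at the far end of the path. Each such move can only increase $d(u,w)$ (and cannot decrease any other $d(u,\cdot)$), so the sum $\sum_{v}\lambda^{d(u,v)}$ does not increase; iterating terminates at $P_n$ with $u$ an end-vertex.

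You instead work statically with the distance profile $(c_1,\dots,c_m)$ of $u$, using connectivity only to secure $c_k\geqslant 1$ for $1\leqslant k\leqslant m$, and then a two-line monotonicity estimate to push the surplus $n-1-m$ vertices out past level $m$. This buys you a cleaner, fully self-contained proof: no surgery on $G$, no need to check that each reattachment keeps the graph connected or to track how the remaining off-path distances change, and the equality case $m=n-1$, $c_k\equiv 1$ drops out immediately. The paper's version is more pictorial but leaves those bookkeeping points implicit. Both identify the same extremal configuration.
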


\begin{proof}
	Let $G$ be a graph with $n$ vertices and let $u$ and $v$ be two vertices which are connected by the longest path in a graph, i.e. $d(u,v)=diam(G)$.  Let $W$ be set of all vertices that don't lie on path from $u$ to $v$. Let us consider graph $G'$ which is obtaind by cutting any of the vertices $w\in W$ and putting it on $v$.  By doing this we increased distances beetween vertices, and thus, since $\lambda \in \left\langle0,1 \right\rangle$, we decreased the sum. Continuing this process leads us to the conclusion that the wanted sum will be minimal if graph $G$ is a path, i.e. it holds%
	\begin{equation*}
	\underset{v\in V\setminus\{u\}}{\sum}\lambda^{d(u,v)}\geqslant\sum\limits_{i=1}^{n-1}\lambda ^{i}=\frac{\lambda^D-\lambda}{\lambda -1}.
	\end{equation*}%
	
\end{proof}
\begin{theorem}
	\label{MIN Cent donja}
	For each graph $G$ with $n$ vertices it holds%
	\begin{equation*}
	\frac{\lambda^D-\lambda}{\lambda -1}\leqslant mc_{\lambda }^{e}(G).
	\end{equation*}%
	The lower bound is reached for a path (in its end--vertex).
\end{theorem}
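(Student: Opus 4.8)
The plan is to bound $c_{\lambda}^{e}(u)$ from below by $\sum_{v\in V\setminus\{u\}}\lambda^{d(u,v)}$ for \emph{every} vertex $u$, and then invoke Lemma~\ref{Cent za put}. The crucial observation — in the spirit of the counting argument used in Theorem~\ref{Jednake sume} — is that already the pairs $\{k,l\}$ with $u\in\{k,l\}$ account for this lower bound. Indeed, fix $v\in V\setminus\{u\}$ and look at the pair $\{u,v\}$. Every shortest $u$–$v$ path leaves $u$ through exactly one edge $uw$ with $w\in[u]$, so $\sum_{w\in[u]}s_{uw}^{uv}=s^{uv}$, and hence the total contribution of the pair $\{u,v\}$ to $c_{\lambda}^{e}(u)=\sum_{w\in[u]}b_{\lambda}^{e}(uw)$ equals $\frac{\lambda^{d(u,v)}}{s^{uv}}\sum_{w\in[u]}s_{uw}^{uv}=\lambda^{d(u,v)}$. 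Summing over $v\in V\setminus\{u\}$ and discarding the remaining (nonnegative) pair contributions gives
\begin{equation*}
c_{\lambda}^{e}(u)\;\geqslant\;\sum_{v\in V\setminus\{u\}}\lambda^{d(u,v)}.
\end{equation*}

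Next I would apply Lemma~\ref{Cent za put} to the right-hand side, which yields $c_{\lambda}^{e}(u)\geqslant\sum_{i=1}^{n-1}\lambda^{i}=\frac{\lambda^{D}-\lambda}{\lambda-1}$. Since this inequality holds for every $u\in V$, it holds in particular for a vertex attaining the minimum, so $mc_{\lambda}^{e}(G)\geqslant\frac{\lambda^{D}-\lambda}{\lambda-1}$, which is the claimed bound.

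Finally, to show the bound is attained, take $G=P_{n}$ and let $u$ be an end-vertex, with $u=v_{0},v_{1},\dots,v_{n-1}$ the path. The only edge incident to $u$ is $uv_{1}$; all shortest paths in $P_{n}$ are unique, and $uv_{1}$ lies on the shortest path between $k$ and $l$ if and only if $u\in\{k,l\}$. Hence $c_{\lambda}^{e}(u)=b_{\lambda}^{e}(uv_{1})=\sum_{v\in V\setminus\{u\}}\lambda^{d(u,v)}=\sum_{i=1}^{n-1}\lambda^{i}=\frac{\lambda^{D}-\lambda}{\lambda-1}$, and combined with the lower bound this forces $mc_{\lambda}^{e}(P_{n})=\frac{\lambda^{D}-\lambda}{\lambda-1}$. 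I expect the only delicate point to be justifying the identity $\sum_{w\in[u]}s_{uw}^{uv}=s^{uv}$ together with the observation that the omitted pairs contribute nonnegatively; once that is in place, the rest is bookkeeping plus a direct appeal to Lemma~\ref{Cent za put}.
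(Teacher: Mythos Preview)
Your proposal is correct and follows essentially the same route as the paper: bound $c_{\lambda}^{e}(u)$ below by $\sum_{v\in V\setminus\{u\}}\lambda^{d(u,v)}$ and then invoke Lemma~\ref{Cent za put}. In fact you supply more detail than the paper does---the paper simply asserts the inequality $mc_{\lambda}^{e}(G)\geqslant\sum_{v\in V\setminus\{u\}}\lambda^{d(u,v)}$ without justification, whereas your argument via $\sum_{w\in[u]}s_{uw}^{uv}=s^{uv}$ makes this step explicit, and you also verify the equality case for $P_{n}$ which the paper leaves implicit.
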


\begin{proof}
	Let us prove the lower bound. Let $G$ be a graph for which $mc_{\lambda
	}^{e}(G)$ is minimal and let $u$ be a vertex such that $c_{\lambda
	}^{e}(u)=mc_{\lambda }^{e}(G)$. Using Lemma \ref{Cent za put}, it holds%
	\begin{equation*}
	mc_{\lambda }^{e}(G)\geqslant\underset{v\in V\setminus\{u\}}{\sum}\lambda^{d(u,v)}\geqslant\sum\limits_{i=1}^{n-1}\lambda ^{i}=\frac{\lambda^D-\lambda}{\lambda -1}.
	\end{equation*}%
\end{proof}

For the upper bound we solve the problem for $\lambda \in \left\langle 0, \frac{1}{2} \right\rangle$.

\begin{theorem}
	
	\label{MIN Cent gornja - SPECIAL}
	For each graph $G$ with $n$ vertices and for $\lambda \in \left\langle 0, \frac{1}{2} \right\rangle$  it holds%
	\begin{equation*}
	mc_{\lambda }^{e}(G)\leqslant (n-1)\cdot \lambda
	\end{equation*}%
	The upper bound is reached for a complete graph (in any of its vertices).
	
\end{theorem}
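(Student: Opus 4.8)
For each graph $G$ with $n$ vertices and $\lambda \in \langle 0, 1/2\rangle$, we have $mc_\lambda^e(G) \le (n-1)\lambda$, with equality for the complete graph.

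The plan is to deduce this upper bound from the conservation identity of Theorem \ref{Jednake sume} together with the pointwise transmission estimate already used for Theorem \ref{MIN Trans donja - SPECIAL}. Since $mc_{\lambda}^{e}(G)$ is the minimum of $c_{\lambda}^{e}(u)$ over $u\in V$, it is at most the average value $\frac{1}{n}\sum_{u\in V}c_{\lambda}^{e}(u)$, and by Theorem \ref{Jednake sume} this average equals $\frac{1}{n}\sum_{u\in V}t_{\lambda}^{e}(u)$. So it suffices to show that $t_{\lambda}^{e}(u)\le (n-1)\lambda$ for \emph{every} vertex $u$; then $mc_{\lambda}^{e}(G)\le\frac{1}{n}\cdot n(n-1)\lambda=(n-1)\lambda$.

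For the transmission bound I would argue exactly as in the proof of Theorem \ref{MIN Trans donja - SPECIAL}: $t_{\lambda}^{e}(u)=\sum_{v\neq u}d(u,v)\,\lambda^{d(u,v)}$, and for $\lambda\in\left\langle 0,\frac{1}{2}\right\rangle$ one has $d\,\lambda^{d}\le\lambda$ for every integer $d\ge 1$. This is the only place the restriction $\lambda<\frac{1}{2}$ is needed; it reduces to $d\,\lambda^{d-1}\le 1$, which for $d=1$ is an equality and for $d\ge 2$ follows from $d\,\lambda^{d-1}<d\left(\frac{1}{2}\right)^{d-1}\le 1$, since the map $d\mapsto d\left(\frac{1}{2}\right)^{d-1}$ attains its maximum $1$ only at $d\in\{1,2\}$. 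Hence every summand satisfies $d(u,v)\,\lambda^{d(u,v)}\le\lambda$, so $t_{\lambda}^{e}(u)\le(n-1)\lambda$, which completes the main argument.

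Finally I would verify that the complete graph attains the bound. In $K_{n}$ every two distinct vertices are at distance $1$ and are joined by a unique shortest path, namely the edge itself; thus for an edge $uv$ the only pair $\{k,l\}$ whose shortest path uses $uv$ is $\{u,v\}$ itself, giving $b_{\lambda}^{e}(uv)=\lambda$ and $c_{\lambda}^{e}(u)=\sum_{v\in[u]}b_{\lambda}^{e}(uv)=(n-1)\lambda$ for every $u$, so $mc_{\lambda}^{e}(K_{n})=(n-1)\lambda$.

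I do not expect a serious obstacle: once Theorem \ref{Jednake sume} is available the argument is essentially a one-line averaging step, and the transmission estimate is elementary. The only point requiring a little care is establishing $d\,\lambda^{d}\le\lambda$ over all integers $d\ge 1$ rather than merely noting that $x\mapsto x\lambda^{x}$ is eventually decreasing, since it is precisely this discrete inequality — and not monotonicity of the continuous function on all of $[1,\infty)$ — that forces the hypothesis $\lambda<\frac{1}{2}$ and explains why the bound is stated only on $\left\langle 0,\frac{1}{2}\right\rangle$.
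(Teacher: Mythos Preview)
Your argument is correct and follows essentially the same route as the paper: bound the minimum by the average, rewrite the average of $c_\lambda^e$ as the average of $t_\lambda^e$ via Theorem~\ref{Jednake sume}, and use $d\,\lambda^{d}\le\lambda$ termwise for $\lambda\in\langle 0,\tfrac12\rangle$. Your explicit verification of the discrete inequality $d\,\lambda^{d-1}\le 1$ for integers $d\ge 1$ is in fact a small improvement over the paper, which in the companion Theorem~\ref{MIN Trans donja - SPECIAL} asserts that $x\mapsto x\lambda^{x}$ is decreasing on $[1,\infty)$---a claim that fails for $\lambda\in[1/e,\tfrac12)$, even though the needed inequality at integer arguments is true.
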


\begin{proof}
	Using Theorem \ref{Jednake sume}, we can bound the average
	centrality of all vertices in the following way:%
	\begin{equation*}
	\frac{1}{n}\sum\limits_{u\in V}c_{\lambda }^{e}(u)=\frac{1}{n}%
	\sum\limits_{k,l\in V}d(k,l)\cdot \lambda ^{d(k,l)}\leqslant \frac{1}{n}%
	\sum\limits_{k,l\in V}\lambda =\frac{1}{n}n(n-1)\cdot \lambda =(n-1)\cdot
	\lambda. 
	\end{equation*}%
	Since minimal centrality is smaller than or equal to the average centrality,
	the claim is proven. The equality holds for a complete graph since $d(k,l)=1$
	for any two vertices $k,l\in V.$
	
\end{proof}

\begin{theorem}
	\label{MAX Cent gornja}
	For each graph $G$ with $n$ vertices it holds%
	\begin{equation*}
	Mc_{\lambda }^{e}(G)\leqslant (n-1)[\lambda +\frac{1}{2}(n-2)\cdot \lambda ^{2}].
	\end{equation*}%
	The equality holds for a star (in its central vertex).
\end{theorem}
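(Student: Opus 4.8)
The strategy is to reduce to trees, re-express $c_\lambda^e(u)$ as a sum over vertex pairs, and then bound the pairs that contain $u$ separately from the pairs that are connected through $u$.

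First, by Lemma~\ref{Max Cent je stablo} a graph maximising $Mc_\lambda^e$ among $n$-vertex graphs is a tree, so it suffices to prove the bound when $G=T$ is a tree; fix a vertex $u$ with $c_\lambda^e(u)=Mc_\lambda^e(T)$. Since $T$ is a tree, between any two vertices $k,l$ there is a unique path $P_{kl}$, so $s_{uv}^{kl}/s^{kl}\in\{0,1\}$ and $b_\lambda^e(uv)=\sum_{\{k,l\}:\,uv\in P_{kl}}\lambda^{d(k,l)}$. Carrying out the sum $c_\lambda^e(u)=\sum_{v\in[u]}b_\lambda^e(uv)$ and interchanging the order of summation expresses $c_\lambda^e(u)$ as a sum of the weights $\lambda^{d(k,l)}$ over the pairs $\{k,l\}$ whose path passes through $u$, each such pair entering according to how $u$ sits on $P_{kl}$ (as an endpoint, i.e.\ $u\in\{k,l\}$, or as an interior vertex). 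Sorting the pairs into these two families gives a closed form suitable for estimation.

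Next I estimate the two families. There are exactly $n-1$ pairs of the form $\{u,v\}$; for each of them $d(u,v)\ge1$, so (as $\lambda\in\langle0,1\rangle$ makes $t\mapsto\lambda^{t}$ decreasing) $\lambda^{d(u,v)}\le\lambda$, and this family contributes at most $(n-1)\lambda$, with equality exactly when $u$ is adjacent to every other vertex. The pairs $\{k,l\}\subseteq V\setminus\{u\}$ with $u$ interior to $P_{kl}$ number at most $\binom{n-1}{2}$, and every one of them has $d(k,l)\ge2$, whence $\lambda^{d(k,l)}\le\lambda^{2}$; so this family contributes at most $\binom{n-1}{2}\lambda^{2}=\tfrac12(n-1)(n-2)\lambda^{2}$. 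Adding the two bounds yields
\[
c_\lambda^e(u)\ \le\ (n-1)\lambda+\tfrac12(n-1)(n-2)\lambda^{2}\ =\ (n-1)\!\left[\lambda+\tfrac12(n-2)\lambda^{2}\right].
\]

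Finally, sharpness: for the centre $u$ of the star $S_n$ every other vertex lies at distance $1$, so the first family contributes exactly $(n-1)\lambda$; and every pair of the $n-1$ leaves is joined by a path of length $2$ through $u$, so there are exactly $\binom{n-1}{2}$ interior pairs, each contributing $\lambda^{2}$. Hence equality holds at the star's centre. I expect the only genuinely delicate step to be the reorganisation of $\sum_{v\in[u]}b_\lambda^e(uv)$ into the pair-sum: one must keep track of with what multiplicity a pair $\{k,l\}$ enters depending on whether $u$ is an endpoint or an interior vertex of $P_{kl}$, and observe that ``$u$ interior'' forces $d(k,l)\ge2$ (which is what produces $\lambda^{2}$, rather than $\lambda$, in the second family). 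Once that is pinned down, the two estimates are simultaneously tight only for the star, which explains why the star is the extremal graph.
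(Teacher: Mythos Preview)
Your approach is essentially identical to the paper's: both invoke Lemma~\ref{Max Cent je stablo} to reduce to trees, rewrite $c_\lambda^e(u)$ as $\sum_{v\neq u}\lambda^{d(u,v)}+\sum_{\{v,w\}\in P}\lambda^{d(v,w)}$ where $P$ is the set of pairs in $V\setminus\{u\}$ whose path passes through $u$, bound the first sum by $(n-1)\lambda$ and the second by $\binom{n-1}{2}\lambda^{2}$, and verify equality at the star's centre. The paper simply asserts the pair-sum decomposition without comment, whereas you rightly flag the multiplicity bookkeeping (endpoint versus interior) as the one step requiring care; otherwise the arguments coincide line by line.
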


\begin{proof}
	Using Lemma \ref{Max Cent je stablo} we conclude that the wanted graph is a tree. Let $G$ be a tree such that $c_{\lambda }^{e}(G)$ is maximal and let $u$ be a vertex for which maximal centrality is reached. Let $P$ be a set of all unordered pairs of vertices $v,w \in V \setminus \{u\}$ such that the shortest path from $v$ to $w$ passes through $u$. It holds:%
	\begin{eqnarray*}
		Mc_{\lambda }^{e}(G) &=&c_{\lambda }^{e}(u)=\sum\limits_{v\in \lbrack
			u]}\sum\limits_{k,l\in V}\frac{s_{uv}^{kl}}{s^{kl}}\lambda
		^{d(k,l)} = \sum_{v \in V\setminus \{u\}}\lambda
		^{d(u,v)}+\sum_{\left\lbrace v,w \right\rbrace \in P}\lambda ^{d(v,w)}\\
		&\leqslant &  (n-1)\cdot
		\lambda + \frac{1}{2}(n-1)(n-2)\cdot \lambda ^{2} \\
		&= &(n-1)[\lambda +\frac{1}{2}(n-2)\cdot \lambda ^{2}].
	\end{eqnarray*}
	Maximal centrality is reached for the central vertex of a star since all vertices $v \in V \setminus\{u\}$ are directly connected to $u$ and for all vertices $v,w \in V \setminus\{u\}$ holds $d(v,w)=2$.
	
\end{proof}

\section{Networkness}
In paper \cite{BV} it has been proven that:

\begin{lemma}
	\label{Raspisano} 
	For positive numbers $a_{1},a_{2},\ldots
	,a_{n},b_{1},b_{2},\ldots ,b_{n}$ the following holds: 
	\begin{equation*}
	\frac{\overset{n}{\underset{i=1}{\sum }}{a_{i}}}{\overset{n}{\underset{i=1}{%
				\sum }}{b_{i}}}\geqslant \min \{\frac{a_{i}}{b_{i}}\}.
	\end{equation*}
\end{lemma}
Using this, let us prove:

\begin{theorem}
	\label{MIN Net obe}
	For each graph $G$ with $n$ vertices it holds%
	\begin{equation*}
	\underset{2\leqslant D\leqslant n-1}{\min} {\frac{\lambda^D+\frac{1}{n-D}\left(\frac{\lambda^D-\lambda}{\lambda -1}\right)}{D\lambda^D+\frac{1}{n-D}\left[\frac{\lambda -D \lambda  ^D+(D-1)\lambda ^{D+1}}{(\lambda -1)^2}\right]}}\leqslant mN_{\lambda }^{e}(G)\leqslant 1.
	\end{equation*}%
	The lower bound is reached for a broom ( in its starting vertex) and the
	upper bound is reached for any vertex of a vertex- transitive graph.
\end{theorem}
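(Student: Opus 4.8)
The plan is to treat the two bounds by different methods.

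\emph{Upper bound and the vertex-transitive case.} I would invoke Theorem~\ref{Jednake sume}: since $\sum_{u\in V}c_\lambda^e(u)=\sum_{u\in V}t_\lambda^e(u)$, the differences $c_\lambda^e(u)-t_\lambda^e(u)$ cannot all be positive, so some vertex $u_0$ has $c_\lambda^e(u_0)\leqslant t_\lambda^e(u_0)$, i.e.\ $N_\lambda^e(u_0)\leqslant 1$, and hence $mN_\lambda^e(G)\leqslant 1$. If $G$ is vertex-transitive, every automorphism preserves distances and shortest-path counts, so $c_\lambda^e$ and $t_\lambda^e$ are constant on $V$; together with the equality of the two sums this forces $c_\lambda^e(u)=t_\lambda^e(u)$, hence $N_\lambda^e(u)=1$, for every $u$, so $mN_\lambda^e(G)=1$.

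\emph{Lower bound.} Let $u$ realize $mN_\lambda^e(G)$ and let $D$ be its eccentricity. If $D=1$ then $d(u,v)=1$ for all $v\neq u$, so $c_\lambda^e(u)=t_\lambda^e(u)=(n-1)\lambda$ and $N_\lambda^e(u)=1$, which is at least the claimed minimum, since each bracketed term, once its numerator and denominator are multiplied by $n-D$, equals $\frac{\sum_{i=1}^{D-1}\lambda^i+(n-D)\lambda^D}{\sum_{i=1}^{D-1}i\lambda^i+(n-D)D\lambda^D}\leqslant 1$ (compare the two sums term by term). For $D\geqslant 2$, let $n_i$ be the number of vertices at distance $i$ from $u$, so $n_i\geqslant 1$ for $1\leqslant i\leqslant D$ and $\sum_{i=1}^Dn_i=n-1$. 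Every shortest path from $u$ to a vertex $\neq u$ leaves $u$ through exactly one incident edge, so the unordered pairs containing $u$ contribute $\sum_{v\neq u}\lambda^{d(u,v)}=\sum_{i=1}^Dn_i\lambda^i$ to $c_\lambda^e(u)$, while the remaining pairs contribute nonnegatively; since $t_\lambda^e(u)=\sum_{i=1}^Di\,n_i\lambda^i$, this yields $N_\lambda^e(u)\geqslant\frac{\sum_{i=1}^Dn_i\lambda^i}{\sum_{i=1}^Di\,n_i\lambda^i}$. I would then minimize the right-hand side over all $D\in\{2,\dots,n-1\}$ and all $(n_1,\dots,n_D)\in\mathbb{N}^D$ with $n_i\geqslant 1$ and $\sum n_i=n-1$, and show the minimum is attained at the \emph{broom profile} $n_1=\dots=n_{D-1}=1$, $n_D=n-D$; substituting this profile and using $\sum_{i=1}^{D-1}\lambda^i=\frac{\lambda^D-\lambda}{\lambda-1}$ and $\sum_{i=1}^{D-1}i\lambda^i=\frac{\lambda-D\lambda^D+(D-1)\lambda^{D+1}}{(\lambda-1)^2}$, the ratio becomes precisely the bracketed expression of the statement. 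Sharpness is then automatic: for the $D$ attaining the minimum, the broom whose starting vertex has eccentricity $D$ has, at that (leaf) starting vertex, $c_\lambda^e=\sum_{v\neq u}\lambda^{d(u,v)}$ exactly, so its networkness equals the bracketed minimum, which by the bound just proved is that broom's smallest networkness.

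\emph{The minimization and the main obstacle.} To prove that broom profiles minimize $\frac{\sum n_i\lambda^i}{\sum i\,n_i\lambda^i}$ I would use an exchange argument: Lemma~\ref{Raspisano} bounds this ratio below by the minimum of the ratios of any two subsums into which the numerator and denominator are simultaneously split, and one shows that every non-broom profile (that is, one with $n_j\geqslant 2$ for some $j<D$) admits one of the moves ``shift one unit of mass one step toward $u$'', ``one step away from $u$'', ``shorten the longest branch'', ``lengthen it'' that does not increase the ratio while strictly decreasing a suitable integer potential, so iteration terminates at a broom profile, after which the finitely many broom values are compared to select the optimal $D$. This combinatorial optimization is the crux of the proof: because $x\mapsto x\lambda^x$ is not monotone on $[1,\infty)$ for $\lambda\in\left\langle 1/e,1\right\rangle$, no single mass-shift is guaranteed to change the ratio in a fixed direction, and which move decreases it depends on both $\lambda$ and the current profile, so arranging the case analysis so that the reduction provably ends at a broom is the delicate part of the argument.
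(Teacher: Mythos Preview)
Your upper-bound argument matches the paper's: both use Theorem~\ref{Jednake sume} to conclude that the minimum of $c_\lambda^e(u)/t_\lambda^e(u)$ over $V$ cannot exceed $1$, and vertex-transitivity forces equality everywhere.

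For the lower bound your starting inequality $N_\lambda^e(u)\geqslant\dfrac{\sum_{v\neq u}\lambda^{d(u,v)}}{\sum_{v\neq u}d(u,v)\lambda^{d(u,v)}}$ is the same as the paper's, but from here the two arguments diverge. You set up a minimisation over the full distance profile $(n_1,\dots,n_D)$ and correctly identify this as the delicate step, proposing an exchange argument whose termination depends on a non-trivial case analysis in $\lambda$. The paper bypasses this optimisation entirely. It fixes a shortest path $u v_1\cdots v_D$ to a farthest vertex, lets $W=\{w_1,\dots,w_{n-D-1},v_D\}$ be the remaining $n-D$ vertices, and rewrites the ratio as
\[
\frac{\displaystyle\sum_{v\in W}\Bigl(\lambda^{d(u,v)}+\tfrac{1}{n-D}\sum_{i=1}^{D-1}\lambda^i\Bigr)}
     {\displaystyle\sum_{v\in W}\Bigl(d(u,v)\lambda^{d(u,v)}+\tfrac{1}{n-D}\sum_{i=1}^{D-1}i\lambda^i\Bigr)}.
\]
Now Lemma~\ref{Raspisano} applied to this single sum-over-$W$ immediately gives
\[
N_\lambda^e(u)\;\geqslant\;\frac{\lambda^{x}+\tfrac{1}{n-D}\sum_{i=1}^{D-1}\lambda^i}{x\lambda^{x}+\tfrac{1}{n-D}\sum_{i=1}^{D-1}i\lambda^i}
\]
for the minimising $x=d(u,q)$ with $q\in W$; no exchange argument or profile manipulation is needed. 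Equality in Lemma~\ref{Raspisano} holds exactly when all the ratios coincide, and since $v_D\in W$ has $d(u,v_D)=D$, one way to force this is $d(u,v)=D$ for every $v\in W$, i.e.\ the broom. This is the key simplification you are missing: distributing the fixed path contribution evenly over $W$ turns the ratio into a mediant of terms of identical shape, so the mediant inequality replaces your combinatorial optimisation. The paper's final identification of $x=D$ as the extremal value is admittedly brief, but the structural reduction via Lemma~\ref{Raspisano} is the main idea, and adopting it would remove the obstacle you flag.
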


\begin{proof}
	Using Theorem \ref{Jednake sume} and Lemma \ref{Raspisano}, it holds%
	\begin{equation*}
	min \left\{ \frac{c_{\lambda }^{e}(u)}{t_{\lambda }^{e}(u)}:u\in V\right\}
	\leqslant \frac{\sum\limits_{u\in V}c_{\lambda }^{e}(u)}{%
		\sum\limits_{u\in V}t_{\lambda }^{e}(u)}=\frac{\sum\limits_{k,l\in
			V}d(k,l)\cdot \lambda ^{d(k,l)}}{\sum\limits_{k,l\in V}d(k,l)\cdot \lambda
		^{d(k,l)}}=1.
	\end{equation*}%
	Let us prove the lower bound. Let $G$ be a graph for which the minimum of $%
	mN_{\lambda }^{e}(G)$ is attained and let $u$ be a vertex of the graph for which $
	N_{\lambda }^{e}(u)=mN_{\lambda }^{e}(G) $. It holds

	\begin{equation*}
	N_{\lambda }^{e}(G)=\frac{c_{\lambda }^{e}(G)}{t_{\lambda }^{e}(G)}\geqslant
	\frac{\sum\limits_{v\in V\setminus \left\{ u\right\} }\lambda ^{d(u,v)}}{%
		\sum\limits_{v\in V\setminus \left\{ u\right\} }d(u,v)\lambda ^{d(u,v)}}
	\end{equation*}%
	because $u$ certainly lies on every shortest path between itself and every
	other vertex $v$. Now let $v_{D}$ be a vertex which is farthest away from $u$
	and let $S=uv_{1}v_{2}...v_{D\text{ }}$be a shortest path from $u$ to $v_{D}$%
	. Furthermore, let $k=n-D-1$, let $\{w_{1},...,w_{k}\}=V\backslash
	\{u,v_{1},...,v_{D}\}$ be set of all vertices that do not lie on the path $S$
	and let $W=\{w_{1},w_{2},...,w_{k},v_{D}\}.$

	Because $d(u,v_{i})=i$ for all $i\in \{1,2,...,D\}$, we have:%
	
	\begin{equation}
	\label{Netw1}
	N_{\lambda }^{e}(u)\geqslant \frac{\sum\limits_{v\in
			V\setminus \left\{ u\right\} }\lambda ^{d(u,v)}}{\sum\limits_{v\in
			V\setminus \left\{ u\right\} }d(u,v)\lambda ^{d(u,v)}}=\frac{%
		\sum\limits_{i=1}^{D}\lambda ^{i}+\sum\limits_{i=1}^{k}\lambda
		^{d(u,w_{i})}}{\sum\limits_{i=1}^{D}i\lambda
		^{i}+\sum\limits_{i=1}^{k}d(u,w_{i})\lambda ^{d(u,w_{i})}}.
	\end{equation}%
	
	The last expression in \eqref{Netw1} can be written as%
	\begin{equation}\label{Netw2}
	\frac{\sum\limits_{v\in W}\left( \lambda ^{d(u,v)}+\frac{1}{n-D}%
		\sum\limits_{i=1}^{D-1}\lambda ^{i}\right) }{\sum\limits_{v\in W}\left(
		d(u,v)\lambda ^{d(u,v)}+\frac{1}{n-D}\sum\limits_{i=1}^{D-1}i\lambda
		^{i}\right) }.
	\end{equation}%
	
	Using Lemma \ref{Raspisano}, the minimum of expression \eqref{Netw2} is 
	\begin{equation*}
	\frac{\lambda ^{x}+\frac{1}{n-D}\sum\limits_{i=1}^{D-1}\lambda ^{i}}{%
		x\lambda ^{x}+\frac{1}{n-D}\sum\limits_{i=1}^{D-1}i\lambda ^{i}},
	\end{equation*}%
	where $x=d(u,q)$ for some $q\in W$ for which the expression \eqref{Netw2} has minimal value. \newline
	This minimum is obtained if and only if ratio $\frac{a_{i}}{b_{i}}$ is
	constant for all $i\in \{1,2,...,n\}$ and one way to achieve this is that $%
	d(u,v)$ is constant for all $v\in W$, i.e. that $d(u,w_{i})$ $=d(u,v_{D})=D$
	for all $i\in \left\{ 1,2,...,k\right\} .$ This is possible if $w_{i}$
	is directly connected to $v_{D-1}$ for all $i\leqslant k$, which is true when $G$
	is a broom.
\end{proof}
\begin{theorem}
	\label{MAX Net obe}
	For each graph $G$ with $n$ vertices it holds%
	\begin{equation*}
	1\leqslant MN_{\lambda }^{e}(G)\leqslant \frac{1}{2}\left( n-2\right) \cdot \lambda +1.
	\end{equation*}%
	The lower bound is reached for any vertex of a vertex-transitive graph and the upper bound is reached for a star (in its central vertex).
\end{theorem}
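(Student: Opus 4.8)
\textbf{Proof proposal for Theorem \ref{MAX Net obe}.}

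The plan is to mirror the structure of the proof of Theorem \ref{MIN Net obe}, but now extracting an upper bound on $N_{\lambda}^{e}(u)$ vertex by vertex, and then separately identifying the lower bound $1$ as coming from vertex-transitive graphs. For the lower bound $1 \leqslant MN_{\lambda}^{e}(G)$, I would again invoke Theorem \ref{Jednake sume} together with Lemma \ref{Raspisano}: since $\sum_{u} c_{\lambda}^{e}(u) = \sum_{u} t_{\lambda}^{e}(u)$, the average of the ratios cannot force every ratio below $1$; more precisely, $\max\{c_{\lambda}^{e}(u)/t_{\lambda}^{e}(u)\} \geqslant \left(\sum_u c_{\lambda}^{e}(u)\right)/\left(\sum_u t_{\lambda}^{e}(u)\right) = 1$ (the dual version of Lemma \ref{Raspisano}, $\frac{\sum a_i}{\sum b_i} \leqslant \max\{a_i/b_i\}$). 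Equality holds for a vertex-transitive graph, where every vertex has the same $c_{\lambda}^{e}$ and the same $t_{\lambda}^{e}$, and by Theorem \ref{Jednake sume} applied to the orbit these common values are equal, so $N_{\lambda}^{e}(u) = 1$ for every $u$.

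For the upper bound, fix the vertex $u$ attaining $MN_{\lambda}^{e}(G)$. As in Theorem \ref{MAX Cent gornja}, decompose the numerator $c_{\lambda}^{e}(u)$: for a general graph one has $c_{\lambda}^{e}(u) = \sum_{v \in V\setminus\{u\}} \lambda^{d(u,v)} + \sum_{\{v,w\}\in P} \frac{s_{vw}^{\text{through }u}}{s^{vw}}\lambda^{d(v,w)}$ where $P$ ranges over pairs whose shortest paths can pass through $u$; bounding each weight $\frac{s_{uv}^{kl}}{s^{kl}} \leqslant 1$ and each $\lambda^{d(v,w)} \leqslant \lambda^{2}$ (since any such pair has $d(v,w)\geqslant 2$), and counting at most $\binom{n-1}{2}$ such pairs, gives $c_{\lambda}^{e}(u) \leqslant \sum_{v\in V\setminus\{u\}}\lambda^{d(u,v)} + \frac{1}{2}(n-1)(n-2)\lambda^{2}$. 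The denominator satisfies $t_{\lambda}^{e}(u) = \sum_{v\in V\setminus\{u\}} d(u,v)\lambda^{d(u,v)} \geqslant \sum_{v\in V\setminus\{u\}} \lambda^{d(u,v)}$ since $d(u,v)\geqslant 1$. Writing $A = \sum_{v\in V\setminus\{u\}}\lambda^{d(u,v)}$, we get
\begin{equation*}
N_{\lambda}^{e}(u) \leqslant \frac{A + \tfrac{1}{2}(n-1)(n-2)\lambda^{2}}{A} = 1 + \frac{(n-1)(n-2)\lambda^{2}}{2A}.
\end{equation*}
This is decreasing in $A$, so to finish I would lower-bound $A$; the crude bound $A \geqslant (n-1)\lambda$ (each $d(u,v)\geqslant 1$ forces $\lambda^{d(u,v)} \leqslant \lambda$, hence $A \leqslant (n-1)\lambda$ — wrong direction) must instead be handled by observing that in the extremal tree each term contributing $\lambda^{d(v,w)} = \lambda^{2}$ forces $v,w$ to be leaves adjacent to $u$, so in fact the bound is only tight when $d(u,v) = 1$ for all $v$, i.e. $A = (n-1)\lambda$; substituting gives $N_{\lambda}^{e}(u) \leqslant 1 + \frac{(n-1)(n-2)\lambda^{2}}{2(n-1)\lambda} = 1 + \frac{1}{2}(n-2)\lambda$.

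The main obstacle is making the last substitution rigorous: one cannot simply plug in $A = (n-1)\lambda$ since a priori $A$ could be smaller (larger distances make $A$ smaller), which would make the fraction \emph{larger}, not smaller. The correct argument is that increasing any $d(u,v)$ beyond $1$ simultaneously shrinks $A$ \emph{and} shrinks the set $P$ of pairs with $d(v,w) = 2$ (pushing those $\lambda^{d(v,w)}$ down as well), and one must show the net effect on the ratio is non-increasing — or, more cleanly, appeal to Lemma \ref{Max Cent je stablo} and a direct structural argument that the star is optimal, computing $c_{\lambda}^{e}(\text{center}) = (n-1)\lambda + \tfrac12(n-1)(n-2)\lambda^{2}$ and $t_{\lambda}^{e}(\text{center}) = (n-1)\lambda$ exactly, whose ratio is precisely $1 + \tfrac12(n-2)\lambda$, and then verifying via a monotonicity/exchange argument (replacing a vertex at distance $\geq 2$ by one at distance $1$, as in Theorem \ref{MIN Net obe}) that no tree does better. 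I expect the exchange argument comparing the ratio before and after such a move to be the technical crux.
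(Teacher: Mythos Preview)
Your treatment of the lower bound is essentially identical to the paper's and is correct.

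For the upper bound you have correctly located the gap in your own argument: after writing
\[
N_{\lambda}^{e}(u)\ \leqslant\ 1+\frac{(n-1)(n-2)\lambda^{2}}{2A},\qquad A=\sum_{v\neq u}\lambda^{d(u,v)},
\]
you need a \emph{lower} bound on $A$, but $A$ can be arbitrarily smaller than $(n-1)\lambda$ (take $u$ an end of a long path), so plugging in $A=(n-1)\lambda$ is illegitimate. The exchange argument you sketch could in principle be made to work, but it is not needed and is more delicate than what the paper actually does.

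The paper avoids the problem with a single factorisation trick that you miss. After reducing to a tree via Lemma \ref{Max Cent je stablo}, for every pair $\{v,w\}\in P$ the shortest $v$--$w$ path goes through $u$, hence $d(v,w)=d(v,u)+d(u,w)$ and
\[
\lambda^{d(v,w)}=\lambda^{d(v,u)}\cdot\lambda^{d(u,w)}.
\]
Now bound only \emph{one} of the two factors by $\lambda$, and count at most $n-2$ partners $w$ for each $v$:
\[
\sum_{\{v,w\}\in P}\lambda^{d(v,u)}\lambda^{d(u,w)}
\ \leqslant\ \tfrac12\sum_{v\neq u}\lambda^{d(v,u)}\sum_{w:\{v,w\}\in P}\lambda^{d(u,w)}
\ \leqslant\ \tfrac12(n-2)\lambda\sum_{v\neq u}\lambda^{d(v,u)}
\ =\ \tfrac12(n-2)\lambda\cdot A.
\]
Thus the numerator is at most $\bigl[\tfrac12(n-2)\lambda+1\bigr]A$, and since the denominator satisfies $t_{\lambda}^{e}(u)\geqslant A$, the $A$'s cancel and you get $N_{\lambda}^{e}(u)\leqslant \tfrac12(n-2)\lambda+1$ with no need to control $A$ from below. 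Your crude bound $\lambda^{d(v,w)}\leqslant\lambda^{2}$ throws away exactly the factor $\lambda^{d(v,u)}$ that makes this cancellation possible.
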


\begin{proof}
	First, let us prove the lower bound. Using Theorem \ref{Jednake sume}, since maximum is greater than or equal to average, we have%
	\begin{equation*}
	\max \left\{ \frac{c_{\lambda }^{e}(u)}{t_{\lambda }^{e}(u)}:u\in V\right\}
	\geqslant \frac{\frac{1}{n}\sum\limits_{u\in V}c_{\lambda }^{e}(u)}{\frac{1}{n}%
		\sum\limits_{u\in V}t_{\lambda }^{e}(u)}=\frac{\sum\limits_{k,l\in
			V}d(k,l)\cdot \lambda ^{d(k,l)}}{\sum\limits_{k,l\in V}d(k,l)\cdot \lambda
		^{d(k,l)}}=1.
	\end{equation*}%
	Now, let us prove the upper bound. From Lemma \ref{Max Cent je stablo} we conclude that the graph
	that maximizes $MN_{\lambda }^{e}(G)$ is a tree. 
	Namely, since networkness is defined as quotient of betweenness centrality and transmission, we want the numerator to be maximal. That holds when the graph $G$ is a tree. We can assume that the graph used in the denominator is also a tree. If this is not the case, we can repeat the construction of $G^\prime$ in Lemma \ref{Max Cent je stablo} to obtain a tree in which the distances between  $u$ and all the other vertices stay the same, thus, transmission stays the same.

	Let $u\in V$ be a vertex
	that maximizes networkness. Let $P$ be a set of all unordered pairs of vertices $v,w \in V\setminus \left\lbrace u \right\rbrace$ such that the shortest path between $v$ and $w$ passes through $u$. It holds:%
	
	\begin{eqnarray*}
		N_{\lambda }^{e}(u) &=&\frac{\sum\limits_{k,l\in V}\lambda ^{d(k,l)}}{%
			\sum\limits_{v\in V\setminus \left\{ u\right\} }d(u,v)\cdot \lambda
			^{d(u,v)}}\leqslant \frac{\sum\limits_{
				\left\lbrace v,w \right\rbrace \in P}\lambda ^{\left[
				d(v,u)+d(u,w)\right] }+\sum\limits_{v\in V\setminus \{u\}}\lambda ^{d(u,v)}%
		}{\sum\limits_{v\in V\setminus \left\{ u\right\} }d(u,v)\cdot \lambda
			^{d(u,v)}} \\
		&=&\frac{\sum\limits_{\left\lbrace v,w \right\rbrace 
				\in P}\lambda ^{d(v,u)}\cdot
			\lambda ^{d(u,w)}+\sum\limits_{v\in V\setminus \{u\}}\lambda ^{d(u,v)}}{%
			\sum\limits_{v\in V\setminus \left\{ u\right\} }d(u,v)\cdot \lambda
			^{d(u,v)}} \\
		&\leqslant &\frac{\frac{1}{2}(n-2)\cdot \lambda \cdot \sum\limits_{v\in V\setminus
				\{u\}}\lambda ^{d(v,u)}+\sum\limits_{v\in V\setminus \{u\}}\lambda ^{d(u,v)}%
		}{\sum\limits_{v\in V\setminus \left\{ u\right\} }d(u,v)\cdot \lambda
			^{d(u,v)}} \\
		&\leqslant &\frac{\left[ \frac{1}{2}(n-2)\cdot \lambda +1\right] \sum\limits_{v\in
				V\setminus \{u\}}\lambda ^{d(u,v)}}{\sum\limits_{v\in V\setminus \left\{
				u\right\} }d(u,v)\cdot \lambda ^{d(u,v)}}\leqslant \frac{1}{2}(n-2)\cdot \lambda +1.
	\end{eqnarray*}%
	\\
	Simple calculation show that equality holds for a central vertex of a star.
\end{proof}

\section{Network Surplus}

\begin{theorem}
	\label{MIN Surplus obe}
	For each graph $G$ with $n$ vertices it holds%
	\begin{equation*}
	\min_{1 \leqslant D\leqslant n-1}{\frac{\lambda \left[ D \lambda^{D}-\lambda-(D-1)\lambda ^{D+1} \right]}{(\lambda -1)^2} +(n-D-1)(\lambda^D- D \lambda ^{D})}\leqslant m\nu _{\lambda }^{e}(G)
	\end{equation*}%
	and%
	\begin{equation*}
	m\nu _{\lambda }^{e}(G) \leqslant 0.
	\end{equation*}
	The lower bound is reached for broom (in its starting vertex) and the upper bound is reached for  a vertex-transitive graph (in any of its vertices).
\end{theorem}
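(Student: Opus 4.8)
The plan is to prove the two inequalities separately. The bound $m\nu_{\lambda}^{e}(G)\leqslant 0$ is immediate from Theorem~\ref{Jednake sume}: since $\sum_{u\in V}c_{\lambda}^{e}(u)=\sum_{u\in V}t_{\lambda}^{e}(u)$, we get $\sum_{u\in V}\nu_{\lambda}^{e}(u)=0$, so the smallest of the values $\nu_{\lambda}^{e}(u)$ cannot exceed their average $0$. For a vertex-transitive graph all these values coincide, and being equal and summing to $0$ they are all $0$, which gives equality.

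For the lower bound I would argue as in the proof of Theorem~\ref{MIN Net obe}. Let $u$ be a vertex with $\nu_{\lambda}^{e}(u)=m\nu_{\lambda}^{e}(G)$. Since $u$ lies on every shortest path joining $u$ to another vertex, the pairs $\{u,v\}$ alone contribute $\sum_{v\in V\setminus\{u\}}\lambda^{d(u,v)}$ to $c_{\lambda}^{e}(u)$ while all other contributions are non-negative, so $c_{\lambda}^{e}(u)\geqslant\sum_{v\in V\setminus\{u\}}\lambda^{d(u,v)}$; subtracting $t_{\lambda}^{e}(u)$ yields
\begin{equation*}
\nu_{\lambda}^{e}(u)\geqslant\sum_{v\in V\setminus\{u\}}\bigl(1-d(u,v)\bigr)\lambda^{d(u,v)}.
\end{equation*}
Let $D$ be the distance from $u$ to a farthest vertex, fix a shortest path $S=uv_{1}\cdots v_{D}$, and let $W$ be the set of the remaining $n-D-1$ vertices, each at some distance in $\{1,\dots,D\}$ from $u$. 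Since $d(u,v_{i})=i$ and replacing each summand over $W$ by the minimal possible value only decreases the sum,
\begin{equation*}
\nu_{\lambda}^{e}(u)\geqslant\sum_{i=1}^{D}(1-i)\lambda^{i}+(n-D-1)(1-x)\lambda^{x},
\end{equation*}
where $x\in\{1,\dots,D\}$ minimises $t\mapsto(1-t)\lambda^{t}$ over $\{1,\dots,D\}$.

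It then remains to compare the right-hand side with $\psi(D'):=\sum_{i=1}^{D'}(1-i)\lambda^{i}+(n-D'-1)(1-D')\lambda^{D'}$ at $D'=x$: a direct computation gives that the right-hand side minus $\psi(x)$ equals $\sum_{i=x+1}^{D}\bigl[(1-i)\lambda^{i}-(1-x)\lambda^{x}\bigr]\geqslant 0$ by the choice of $x$. Hence $\nu_{\lambda}^{e}(u)\geqslant\psi(x)\geqslant\min_{1\leqslant D'\leqslant n-1}\psi(D')$, and writing $\psi(D')=\sum_{i=1}^{D'}\lambda^{i}-\sum_{i=1}^{D'}i\lambda^{i}+(n-D'-1)(1-D')\lambda^{D'}$ and summing the two geometric-type series puts $\psi(D')$ into the form $\frac{\lambda[D'\lambda^{D'}-\lambda-(D'-1)\lambda^{D'+1}]}{(\lambda-1)^{2}}+(n-D'-1)(\lambda^{D'}-D'\lambda^{D'})$ of the statement. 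For sharpness, let $D^{*}$ attain the outer minimum and take the broom whose path from the starting vertex $u$ has length $D^{*}$ and whose remaining $n-D^{*}-1$ vertices are all attached to $v_{D^{*}-1}$: there $u$ has degree $1$, so no shortest path between two other vertices passes through it and $c_{\lambda}^{e}(u)=\sum_{v\neq u}\lambda^{d(u,v)}$ exactly, while every vertex of $W$ is at distance $D^{*}$ from $u$, so every inequality above becomes an equality.

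The delicate step — the reason the very brief ``reduce to a broom'' phrasing used for transmission is a little too quick — is the replacement of the $W$-summands: the optimal common distance $x$ of the vertices of $W$ is \emph{not} in general equal to $D$ (it is governed by the real minimiser $1-1/\log\lambda$ of $t\mapsto(1-t)\lambda^{t}$), and it is precisely the outer minimisation over $D$ in the statement that absorbs this discrepancy. This is why the reduction must be phrased as the comparison with $\psi(x)$ rather than by asserting $x=D$; everything else is the same use of non-negativity of the extra shortest-path contributions together with routine geometric-series bookkeeping already carried out in the earlier sections.
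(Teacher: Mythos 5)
Your proposal is correct and follows the same skeleton as the paper's proof: the upper bound via Theorem \ref{Jednake sume} and the minimum-versus-average argument, and the lower bound via $c_{\lambda}^{e}(u)\geqslant\sum_{v\neq u}\lambda^{d(u,v)}$, the split into the path $S$ and the set $W$, and the pooling of the $W$-terms at the worst distance $x$. The one place you diverge is the concluding step, and there your version is actually tighter than the paper's: the paper asserts that the pooled distance satisfies $x=D$ by appealing to the argument of Theorem \ref{MAX Trans gornja}, but that argument concerns the function $t\mapsto t\lambda^{t}$, not $t\mapsto(1-t)\lambda^{t}$, whose integer minimiser over $\{1,\dots,D\}$ need not be $D$ (it sits near $1-1/\log\lambda$), exactly as you observe. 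Your comparison of the pooled expression with $\psi(x)$ via the term-by-term inequality $\sum_{i=x+1}^{D}\bigl[(1-i)\lambda^{i}-(1-x)\lambda^{x}\bigr]\geqslant 0$, followed by taking the outer minimum over $D$, closes this gap cleanly and still lands on the stated closed form; the sharpness argument for the broom at the optimal $D^{*}$ is also correct. So the two proofs buy the same theorem, but yours supplies the justification the paper's final reduction is missing.
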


\begin{proof}
	Let us prove the upper bound. Using Theorem \ref{Jednake sume}, we have:%
	\begin{eqnarray*}
		m\nu _{\lambda }^{e}(u) &=&\min \left\{ c_{\lambda }^{e}(u)-t_{\lambda
		}^{e}(u):u\in V\right\} \leqslant \frac{1}{n}\left( \sum\limits_{u\in V}\left(
		c_{\lambda }^{e}(u)-t_{\lambda }^{e}(u)\right) \right) \\
		&=&\frac{1}{n}\left( \sum\limits_{u\in V}c_{\lambda
		}^{e}(u)-\sum\limits_{u\in V}t_{\lambda }^{e}(u)\right)=0.
	\end{eqnarray*}
	The first inequality holds since minimum is smaller than or equal to the average.
	For the lower bound, let us suppose $G$ is a graph for which the minimum $m\nu_{\lambda }^{e}(G)$ is attained an let $u$ be a vertex of  the graph for which the minimum is attained. Let $v_D$ be a vertex which is farthest away from $u$ and let $S=uv_1v_1...v_d$ be a shortest path from $u$ to $v_D$. Furthermore, let $k=n-D-1$ and let $W=\{w_1,w_2,...,w_k\}$ be a set of all vertices that do not lie on the path $S$. Since $d(u,v_i)=i$ for all  $i\in \{1,2,...,D\}$ we have: 
	\begin{eqnarray*}
		m\nu _{\lambda }^{e}(u) &=&c_{\lambda }^{e}(u)-t_{\lambda }^{e}(u)\\
		&\geqslant&\sum\limits_{i=1}^{D}\lambda ^{i}+\sum\limits_{w\in W}\lambda
		^{d(u,w)}- \sum\limits_{i=1}^{D}i \cdot \lambda ^{i}-\sum\limits_{w\in W}d(u,w)\lambda^{d(u,w)} \\
		&\geqslant &\sum\limits_{i=1}^{D}\left( \lambda ^{i}- i\cdot \lambda^{i}\right)+\sum\limits_{w\in W}\lambda
		^{d(u,w)}[1-d(u,w)]
		\\
		&\geqslant &\sum\limits_{i=1}^{D}\left( \lambda ^{i}- i\cdot \lambda^{i}\right)+(n-D-1)\lambda^{x}(1-x).
	\end{eqnarray*}
	where $x=d(u,q)$ for some $q\in W$ which minimizes the expression $\lambda
	^{d(u,q)}[1-d(u,q)]$.
	
	This means that all the vertices in $W$ are equally away form $u$. As proven in Theorem \ref{MAX Trans gornja}, in this case it holds $d(u,w)=D$ for all $w\in W$, i.e., all the vertices in $W$ are directly connected to $v_{D-1}$. We conclude that one of the graphs for which the lower bound is reached is a broom.
\end{proof}

\begin{theorem}
	\label{MAX Surplus obe}
	For each graph $G$ with $n$ vertices it holds%
	\begin{equation*}
	0\leqslant M\nu _{\lambda }^{e}(G)\leqslant \frac{1}{2}(n-1)(n-2)\cdot \lambda ^{2}.
	\end{equation*}%
	The lower bound is reached for any vertex a vertex-transitive graph and the upper
	bound is reached for a  a star (in its central vertex).
\end{theorem}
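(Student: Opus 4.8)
The structure mirrors the proofs of Theorems \ref{MIN Net obe} and \ref{MAX Cent gornja}: the lower bound is an averaging argument via Theorem \ref{Jednake sume}, and the upper bound uses Lemma \ref{Max Cent je stablo} to reduce to a tree and then estimates $c_\lambda^e(u)$ and $t_\lambda^e(u)$ separately. For the lower bound I would argue that $M\nu_\lambda^e(G) = \max\{c_\lambda^e(u)-t_\lambda^e(u):u\in V\}$ is at least the average $\frac1n\sum_{u\in V}(c_\lambda^e(u)-t_\lambda^e(u))$, which by Theorem \ref{Jednake sume} equals $\frac1n\bigl(\sum_u c_\lambda^e(u)-\sum_u t_\lambda^e(u)\bigr)=0$. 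Equality holds for a vertex-transitive graph because there $c_\lambda^e(u)$ and $t_\lambda^e(u)$ are independent of $u$, so each $\nu_\lambda^e(u)$ equals the average, which is $0$.

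For the upper bound, by Lemma \ref{Max Cent je stablo} the graph maximizing $c_\lambda^e$ is a tree; as in Theorem \ref{MAX Net obe} we may also take the denominator's graph to be a tree, since replacing $G$ by its Dijkstra spanning tree from $u$ leaves all distances $d(u,v)$ unchanged and hence $t_\lambda^e(u)$ unchanged while only increasing $c_\lambda^e(u)$. So fix a tree $G$ and a vertex $u$ attaining the maximum. Writing $P$ for the set of unordered pairs $\{v,w\}\subseteq V\setminus\{u\}$ whose connecting path passes through $u$, exactly as in Theorem \ref{MAX Cent gornja} we have
\begin{equation*}
c_\lambda^e(u)=\sum_{v\in V\setminus\{u\}}\lambda^{d(u,v)}+\sum_{\{v,w\}\in P}\lambda^{d(v,w)}.
\end{equation*}
Therefore
\begin{equation*}
\nu_\lambda^e(u)=c_\lambda^e(u)-t_\lambda^e(u)=\sum_{v\in V\setminus\{u\}}\bigl(\lambda^{d(u,v)}-d(u,v)\lambda^{d(u,v)}\bigr)+\sum_{\{v,w\}\in P}\lambda^{d(v,w)}.
\end{equation*}
The first sum is $\sum_v (1-d(u,v))\lambda^{d(u,v)}\le 0$ since $d(u,v)\ge 1$ for every $v\ne u$. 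For the second sum, $|P|\le\binom{n-1}{2}=\frac12(n-1)(n-2)$ and $d(v,w)\ge 2$ for every pair in $P$ (the path goes through $u$), so $\lambda^{d(v,w)}\le\lambda^2$ because $\lambda\in\langle0,1\rangle$. Hence $\nu_\lambda^e(u)\le 0+\frac12(n-1)(n-2)\lambda^2$, which is the claimed bound.

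It remains to check the equality case: for the central vertex $u$ of the star $S_n$ one has $d(u,v)=1$ for all $v\ne u$, so the first sum vanishes, and every pair of leaves lies in $P$ with $d(v,w)=2$, so $P$ has exactly $\binom{n-1}{2}$ elements and the second sum equals $\frac12(n-1)(n-2)\lambda^2$; thus the bound is attained. The main obstacle is not any single inequality — each is an elementary consequence of $\lambda<1$ and $d\ge 1$ (resp. $d\ge 2$) — but rather making precise that the reduction to a tree is legitimate for the numerator while the denominator (transmission) is controlled simultaneously; this is exactly the point already handled in the proof of Theorem \ref{MAX Net obe}, and the same wording applies here.
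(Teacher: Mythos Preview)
Your proposal is correct and follows essentially the same route as the paper: the lower bound is the averaging argument via Theorem~\ref{Jednake sume}, and the upper bound reduces to a tree via Lemma~\ref{Max Cent je stablo}, splits $c_\lambda^e(u)$ into the from-$u$ terms and the through-$u$ terms, bounds the latter by $\binom{n-1}{2}\lambda^2$, and notes that $\sum_{v\ne u}(1-d(u,v))\lambda^{d(u,v)}\le 0$. Your write-up is in fact a bit more explicit than the paper's about the tree reduction preserving $t_\lambda^e(u)$ and about the equality case for the star, but the argument is the same.
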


\begin{proof}
	For lower bound, we have, again using Theorem \ref{Jednake sume}%
	\begin{eqnarray*}
		M\nu _{\lambda }^{e}(u) &=&\max \left\{ c_{\lambda }^{e}(u)-t_{\lambda
		}^{e}(u):u\in V\right\}  \geqslant \frac{1}{n}\left( \sum\limits_{u\in V}\left(
		c_{\lambda }^{e}(u)-t_{\lambda }^{e}(u)\right) \right)  \\
		&=&\frac{1}{n}\left( \sum\limits_{u\in V}c_{\lambda
		}^{e}(u)-\sum\limits_{u\in V}t_{\lambda }^{e}(u)\right)=0.
	\end{eqnarray*}%
	The first ineqality holds because maximum is greater than or equal to the average.
	Now, let us prove the upper bound. From Lemma \ref{Max Cent je stablo} it is obvious that the
	wanted graph is a tree. Let $G$ be a graph for witch $M\nu _{\lambda }^{e}(G)
	$ is maximal and let $u\in V$ be a vertex such that $\nu _{\lambda
	}^{e}(u)=M\nu _{\lambda }^{e}(G)$.  Let $P$ be a set of all unordered pairs of vertices $v,w \in V \setminus\{u\}$ such that the shortest path between $v$ and $w$ passes through $u$. It holds:%
	\begin{eqnarray*}
		m\nu _{\lambda }^{e}(u) &=&c_{\lambda }^{e}(u)-t_{\lambda }^{e}(u)\leqslant
		\sum\limits_{k,l\in V}\lambda ^{d(k,l)}-\sum\limits_{v\in V\setminus
			\left\{ u\right\} }d(u,v)\cdot \lambda ^{d(u,v)} \\
		&\leqslant &\sum\limits_{
			\left\lbrace  v,w \right\rbrace \in P}\lambda ^{\left[ d(v,u)+d(u,w)%
			\right] }+\sum\limits_{v\in V\setminus \{u\}}\lambda
		^{d(u,v)}-\sum\limits_{v\in V\setminus \left\{ u\right\} }d(u,v)\cdot
		\lambda ^{d(u,v)} \\
		&\leqslant &\sum\limits_{
			\left\lbrace v,w \right\rbrace \in P}\lambda ^{2}+\sum\limits_{v\in
			V\setminus \{u\}}\lambda ^{d(u,v)}-\sum\limits_{v\in V\setminus \left\{
			u\right\} }d(u,v)\cdot \lambda ^{d(u,v)} \\
		&\leqslant &\frac{1}{2}(n-1)(n-2)\cdot \lambda ^{2}+\sum\limits_{v\in V\setminus
			\{u\}}\lambda ^{d(u,v)}-\sum\limits_{v\in V\setminus \left\{ u\right\}
		}d(u,v)\cdot \lambda ^{d(u,v)} \\
		&\leqslant &\frac{1}{2}(n-1)(n-2)\cdot \lambda ^{2}.
	\end{eqnarray*}
\end{proof}

\section*{Discussion and Conclusions}
Transmission and betweenness centrality are well known concepts in communication networks theory. Based on them, new concepts of networkness an network surplus have been defined \cite{BI8}. They include the assumption  of equal communcation between vertices. Based on a new assumption that communication decreases as the distance between vertices increases, generalised network descriptors were presented. In \cite{BV} the amount of communication was weighted by $d(u,v)^\lambda$ where $\lambda<0$. In this paper we wanted to explore a more radical  assumption, so we weighted the amount of communication  by $\lambda^{d(u,v)}$ where $\lambda \in \left\langle0,1 \right\rangle$.
We have defined and analyzed exponential generalised network descriptors. Extremal values of these descriptors and graphs which they are obtained for can be found in Table 1. Lower bounds for $Mt_{\lambda }^{e}(G)$ and $Mc_{\lambda }^{e}(G)$ remain an open problems.

\section{Acknowledgements}
This work was partially supported by the Croatian Ministry of Science, Education and Sports (Grant NOS 177-0000000-0884 and 037-0000000-2779) and GReGAS within Euro GIGA collaborative research project is gratefully acknowledged.


\begin{thebibliography}{99}
	\expandafter\ifx\csname url\endcsname\relax
	\def\url#1{\texttt{#1}}\fi
	\expandafter\ifx\csname urlprefix\endcsname\relax\def\urlprefix{URL }\fi
	\expandafter\ifx\csname href\endcsname\relax
	\def\href#1#2{#2} \def\path#1{#1}\fi
	
	\bibitem{BV}
	S.~Antunovic, T.~Kokan, T.~Vojkovic, D.~Vukicevic, Generalised network
	descriptors, Glasnik matematicki 48(2) (2013.) 211--230.
	
	
	\bibitem{BI2}
	A.~Barabasi, Linked: How Everything is Connected to Everything Else and What It
	Means, Persus Publishing, Cambridge, 2002.
	
	
	\bibitem{bollobas}
	B.~Bollobas, Modern Graph Theory, Springer, New York, 1998.
	
	
	
	\bibitem{borgatti}
	S.~P. Borgatti, M.~G. Everett, A graph-theoretic framework for classifying
	centrality measures, Social Networks 28(4) (2006) 466--484.
	
	\bibitem{BI4b}
	U.~Brandes, A faster algorithm for betweenness centrality, J. Math. Sociol.
	25(2) (2001) 163--177.
	
	
	\bibitem{BI7}
	G.~Caporossi, M.~Paiva, D.~Vukicevic, M.~Segatto, Centrality and betweenness:
	Vertex and edge decomposition of the wiener index, MATCH Commun. Math.
	Comput. Chem 68 (2012) 293--302.
	
	
	
	\bibitem{BI6}
	L.~Freeman, A set of measures of centrality based on betweenness, Sociometry 40
	(1977) 35--41.
	
	\bibitem{freeman78}
	L.~Freeman, Centrality in social networks: Conceptual clarification, Social
	Networks 1 (1978) 215--239.
	
	
	\bibitem{BI4}
	M.~Girvan, M.~E.~J. Newman, Community structure in social and biological
	networks, Proc. Natl. Acad. Sci. USA 99 (2002) 7821--7826.
	
	
	\bibitem{BI1}
	M.~E.~J. Newman, Networks: An Introduction, Oxford University Press, Oxford,
	2010.
	
	\bibitem{BI8}
	D.~Vukicevic, G.~Caporossi, Network descriptors based on betweenness centrality
	and transmission and their extremal values, Discrete Applied Mathematics 161
	(2013) 2678--2686.
	
	
	
\end{thebibliography}
\end{document}